\newtheorem{thm}{Theorem}
\newtheorem{prop}{Proposition}
\newtheorem{cor}{Corollary}
\newtheorem{lem}{Lemma}
\theoremstyle{definition}
\newtheorem{defn}{Definition}
\DeclareMathOperator{\pow}{\mathcal{P}} % Power set
\providecommand{\N}{\mathbb{N}} % Natural numbers
\providecommand{\Z}{\mathbb{Z}} % Integers
\DeclareMathOperator{\im}{Im} % Image
\DeclareMathOperator{\id}{id} % Identity
\DeclareMathOperator{\orb}{Orb} % Orbit under an action
\providecommand{\abs}[1]{\left\lvert#1\right\rvert} % Absolute value
\providecommand{\norm}[1]{\left\lVert#1\right\rVert} % Norm
\providecommand{\set}[2][]{
	\ifthenelse{\isempty{#1}}{
		\left\{#2\right\}
	}{
		\left\{\,#1\;\middle|\;#2\,\right\}}
	}
\providecommand{\Ca}{\mathscr{C}}
\providecommand{\Da}{\mathscr{D}}
\providecommand{\Ia}{\mathscr{I}}
\DeclareMathOperator{\ob}{Ob} % Class of objects of a category
\DeclareMathOperator{\mor}{Mor} % Class of morphisms of a category
\DeclareMathOperator{\morc}{\mathbf{Mor}} % Morphism category
\DeclareMathOperator{\setcat}{\mathbf{Set}} % Category of sets
\DeclareMathOperator{\subl}{\mathbf{Sub}} % Lattice of substructures
\DeclareMathOperator{\autg}{\mathbf{Aut}} % Automorphism group
\DeclareMathOperator{\funcl}{Fun} % Class of functors of a category
\DeclareMathOperator{\funcat}{\mathbf{Fun}} % Functor category
\providecommand{\op}{\mathrm{op}} % Opposite
\DeclareMathOperator{\yo}{Yo} % Yoneda functor
\DeclareMathOperator{\finsetcl}{FinSet} % Class of finite sets
\DeclareMathOperator{\finsetcat}{\mathbf{FinSet}} % Category of finite sets
\DeclareMathOperator{\struct}{Struct} % Class of structures
\DeclareMathOperator{\structc}{\mathbf{Struct}} % Category of structures
\DeclareMathOperator{\paircl}{Pair} % Class of pairs
\DeclareMathOperator{\paircat}{\mathbf{Pair}} % Category of pairs
\DeclareMathOperator{\hypercl}{Hyp} % Class of hypergraphs
\DeclareMathOperator{\hypercat}{\mathbf{Hyp}} % Category of hypergraphs
\DeclareMathOperator{\cyo}{Ca} % Cartesian Yondea functor
\DeclareMathOperator{\pola}{\mathbf{Pol}} % Algebra of polynomials
\DeclareMathOperator{\pol}{Pol} % Set of polynomials
\DeclareMathOperator{\sympol}{SymPol} % Set of symmetric Polynomials
\DeclareMathOperator{\sympola}{\mathbf{SymPol}} % Algebra of symmetric polynomials
\DeclareMathOperator{\isostr}{IsoStr} % Isomorphism classes of structures
\begin{document}
\title{Invariants of structures}
\author[C. Aten]{Charlotte Aten}
\address{Department of Mathematics\\
University of Denver\\Denver 80208\\USA}
\urladdr{\href{https://aten.cool}{https://aten.cool}}
\email{\href{mailto:charlotte.aten@du.edu}{charlotte.aten@du.edu}}
\thanks{This paper constitutes the second half of the author's PhD thesis. Thanks to Jonathan Pakianathan for all his advice and support over the years, without which this project wouldn't've been nearly as fruitful.}
\subjclass[2020]{18C10, 13A50, 05-08}
\keywords{Combinatorics, invariant theory, symmetric polynomials, model theory, categorification}

\begin{abstract}
We give a categorification of the notion of a mathematical structure originally given by Bourbaki in their set theory textbook. We show that any isomorphism-invariant property of a finite structure can be computed by counting the number of isomorphic copies of small substructures it contains. Our main theorem in this direction is a generalization of the classical result of Hilbert about elementary symmetric polynomials generating the algebra of all symmetric polynomials. We also show that, for structures built from sets, the Yoneda functor extends to a canonical embedding of any such category of structures into an associated category of structures in the sense of classical model theory.
\end{abstract}

\maketitle

\tableofcontents

\section{Introduction}
When Bourbaki began writing the \emph{\'{E}l\'{e}ments de math\'{e}matique}, well before category theory had been introduced in algebraic topology, much less in the rest of mathematics, they sought to lay out in the first text of the series, \emph{Theory of Sets} a systematic description of mathematical structures as they would appear throughout the rest of the series\cite{bourbaki2004}. A simplified version of their treatment was that a \emph{structure} was a set, say \(A\), equipped with an indexed family \(\set{f_i}_{i\in I}\) of \emph{relations} \(f_i\) where each \(f_i\) was a subset of a set which could be constructed from \(A\) by taking Cartesian products and powersets finitely many times. Thus, denoting by \(\pow(A)\) the collection of subsets of \(A\), a relation on \(A\) might be a subset of
	\[
		A\times\pow(\pow(A)\times A^{57})\times\pow(\pow(\pow(A))),
	\]
for instance. Note that the now-usual relational structures of model theory are precisely these without allowing the powerset operator.

Bourbaki defined what we would now call morphisms of these structures and proved several results about them, all of which turned out to be of a categorical nature. This is only natural, since Eilenberg was a member of the group. Once his work with Mac Lane had established category theory, Grothendieck and then Cartier were asked to produce a category theory component for the \emph{\'{E}l\'{e}ments}, although if either did their contribution never made it into the texts. Discussions in ``La Tribu'' during the 1950s seem to indicate that Bourbaki felt much of the \emph{\'{E}l\'{e}ments} would have to be rewritten in order to accommodate the new notions from category theory. More damning for categories in the \emph{\'{E}l\'{e}ments} was the difficulty of synthesizing the structural and categorical viewpoints together. The consensus became that this task was not worth the effort\cite[p.328]{corry2004}.

While we don't comment on whether it would have been worth it for Bourbaki to include a fusion of the notions of category and structure in the \emph{\'{E}l\'{e}ments}, we do present one possible categorification of the concept of structure here. We postpone our formal development until \autoref{sec:structures_formally} and begin with a proof of a generalization of a result of Hilbert about symmetric polynomials\cite[p.191]{lang2005} to the setting of finite structures. This generalization is our \autoref{thm:elementary_symmetric_generate} and has the perhaps surprising implication (given as \autoref{cor:first_order}) that any first-order property of a finite structure \(\mathbf{A}\) can be checked by counting the number of embeddings of small substructures \(\mathbf{B}\hookrightarrow\mathbf{A}\), where ``small'' is a function of the logical complexity of the first-order property.

Once we have gone through our formal treatment of structures in \autoref{sec:structures_formally}, we can prove our final result in \autoref{sec:Yoneda}, which is \autoref{prop:cartesian_yo_full_faith} and which says that any category of structures built from sets may be embedded into a category of structures in the classical, model-theoretic sense by way of the Yoneda functor, providing one is willing to adopt a large signature.

This work is reminiscent of some similar topics where combinatorics and category theory overlap, although to this author's knowledge the ideas presented here haven't been addressed elsewhere. In particular, there is the theory of combinatorial species, where generating functions are associated to families of finite structures\cite{joyal1981}. While we will be counting finite structures here, it seems at the moment that there is only a superficial relationship with the results in that area.

In a related vein, the author would like to thank John Baez for pointing out the work of Lov\'{a}sz on structures, which concerns polynomials created by taking sums and products of finite structures themselves\cite{lovasz1967}.

The relationship between finite structures and certain classes of logical formulae has been treated elsewhere. See \cite{bova2013}, for instance, where methods from universal algebra are used to obtain some complexity results.

We make one final acknowledgment to Bill Lawvere (and universal algebra). The idea of categorifying Bourbaki's structures was very much inspired by Lawvere's thesis work on algebraic theories, a categorical treatment of universal algebra\cite{lawvere1963}. Any connection beyond the thematic is beyond the scope of the present work, but it would be interesting to examine the relationship between algebraic categories and categories of structures as defined here, particularly in light of the embedding result given as \autoref{prop:cartesian_yo_full_faith}.

This paper is organized thusly: Following this introduction, we treat structures in an intuitive way in \autoref{sec:structures_naively}. This will allow the reader to proceed on to most of the main results (\autoref{thm:elementary_symmetric_generate} and \autoref{cor:first_order}) without learning our general theory of structures. In \autoref{sec:symmetric_polynomials} we examine polynomials associated to finite structures and prove our aforementioned generalization of Hilbert's result on symmetric polynomials as \autoref{thm:elementary_symmetric_generate}. After this, we tie our results in to logic in \autoref{sec:logic} and prove our claim connecting these polynomials with combinatorics and logic as \autoref{cor:first_order}. Our penultimate \autoref{sec:structures_formally} gives our categorical treatment of Bourbaki's structures, and our final section closes with \autoref{prop:cartesian_yo_full_faith}, which says in part that (modulo accepting a large signature, and only for structures built from sets, and with all due credit to Yoneda) we could have just stuck with classical model theory avoided all of this.

\section{Structures, na\"{i}vely}
\label{sec:structures_naively}
As mentioned in the introduction, Bourbaki's original treatment of structures was fairly involved and presumably a categorification might be even more convoluted. Thus, we postpone this formal development until \autoref{sec:structures_formally} and present three possible ways for the reader to think about structures in this section.

The first is that one might consider a finite structure as one would in model theory. That is, a \emph{finite structure} is a pair  \(\mathbf{A}\coloneqq(A,\set{f_i}_{i\in I})\) where \(A\) is a finite set and the \(f_i\) form an \(I\)-indexed sequence of relations \(f_i\subset A^{\rho(i)}\) where the function \(\rho\colon I\to\N\) is the \emph{signature} of \(\mathbf{A}\). We denote by \(\structc^\rho\) the evident category and by \(\struct^\rho_A\) the collection of all structures of the same signature on the set \(A\), which we call a \emph{kinship class}. The class \(\struct^\rho\) of all structures with signature \(\rho\) is likewise called a \emph{similarity class}. We will always take the index set \(I\) to be finite here.

Reading the paper this way, one can take the statement \(N\in\ob(\Ia)\) to mean \(N\in I\). Similarly, \(F(N)\) indicates the basic relation \(f_N\). Wherever we write \(\rho_A(N)\) we mean nothing more that \(A^{\rho(N)}\). One may safely ignore statements about \(\mor(\Ia)\) and any corresponding discussion of \(F(\nu)\). While there may be a couple comments which remain mysterious in \autoref{sec:structures_naively} and \autoref{sec:symmetric_polynomials} given this perspective, it should be possible to glide over them without any great harm to understanding.

The second way of thinking about finite structures is in the sense of Bourbaki. That is, do the same as in the preceding paragraphs but allow yourself to think of a \emph{relation} as allowing powerset operators and Cartesian products in arbitrary finite compositions. This makes it easier to see how finite topological spaces could be counted among finite structures.

The third way, which should probably be postponed on first reading, is to instead go through \autoref{sec:structures_formally} first to see structures (not necessarily finite, or even built from sets at all) in their full, formal generality. This will make the following sections much more rigorous at the expense of adding extra bookkeeping.

\subsection{Substructures}
There is a natural categorical definition of a substructure. We will make use of this notion in order to disucss meets and joins in a corresponding lattice as part of the proof of \autoref{lem:factorization_lemma}.

\begin{defn}[Substructure]
Given a structure \(\mathbf{A}\) of signature \(\rho\) we refer to a subobject of \(\mathbf{A}\) in \(\structc^\rho\) as a \emph{substructure} of \(\mathbf{A}\).
\end{defn}

In the case of \(\setcat\)-structures we can give a concrete description of the poset of substructures. Given a \(\setcat\)-structure \(\mathbf{A}\coloneqq(A,F)\) of signature \(\rho\) we have that \(\mathbf{A}\) consists of, for each \(N\in\ob(\Ia)\), a subset \(F(N)\subset\rho_A(N)\), and, for each \(\nu\in\mor(\Ia)\) with domain \(N\), a restriction (on both the domain and codomain) \(F(\nu)=(\rho_A(\nu))|_{F(N)}\). This means that for a collection of subsets of the \(\rho_A(N)\) to form a structure of signature \(\rho\) it is necessary and sufficient that given a morphism \(\nu\colon N_1\to N_2\) from \(\Ia\) we have that the image of \(F(N_1)\) under \(\rho_A(\nu)\) is contained in \(F(N_2)\).

Note that a \(\setcat\)-structure with universe \(A\) is a substructure of the structure \((A,\id_{\rho_A})\). A substructure of some \(\mathbf{A}_1\coloneqq(A,F_1)\in\struct^\rho_A\) is then another structure \(\mathbf{A}_2\coloneqq(A,F_2)\in\struct^\rho_A\) such that \(\mathbf{A}_2\le\mathbf{A}_1\) in the substructure poset of \((A,\id_{\rho_A})\), which is equivalent to having for each \(N\in\ob(\Ia)\) that \(F_2(N)\subset F_1(N)\).

One can verify that the substructure poset of \((A,\id_{\rho_A})\) forms a complete lattice and hence the substructure poset \(\subl(\mathbf{A})\) of any \(\mathbf{A}\in\struct^\rho_A\) is also a complete lattice. Since the substructure poset of \((A,(\id_{\rho_A}))\) is a sublattice of a Boolean lattice all of these lattices are also distributive.

\subsection{Finite structures}
We give formal definitions of finite structures here, but if you're thinking of model-theoretic or Bourbakian structures you can ignore these definitions in favor of the ones you already have in mind.

\begin{defn}[Finite signature]
We say that a signature \(\rho\colon\Ia\to\funcat(\setcat,\setcat)\) is \emph{finite} when \(\Ia\) has finitely many objects and finitely many morphisms and for each \(N\in\ob(\Ia)\) and each finite set \(A\) we have that \(\rho_A(N)\) is finite.
\end{defn}

\begin{defn}[Finite structure]
We say that a structure of finite signature \(\rho\) on a finite set is a \emph{finite structure}.
\end{defn}

\begin{defn}[Finite kinship class]
When \(\rho\) is a finite signature and \(A\) is a finite set we say that \(\struct^\rho_A\) is a \emph{finite kinship class}.
\end{defn}

Note that each of the members of a finite kinship class are finite structures and that the kinship class itself is a finite set.

\section{Symmetric polynomials}
\label{sec:symmetric_polynomials}
We prove here, as \autoref{thm:elementary_symmetric_generate}, our generalization of Hilbert's classical result on symmetric polynomials. In order to do this, we consider polynomial algebras associated to finite kinship classes.

\subsection{Definitions}
As will be elucidated further in \autoref{sec:logic}, we will be building polynomials from variables which are meant to check whether a particular element of \(\rho_A(N)\) belongs to \(F(N)\) or not. For example, if we are considering the case where \(\rho_A(N)=\binom{A}{2}\) then we will introduce variables \(x_{N,\set{a_1,a_2}}\) where \(\set{a_1,a_2}\in\binom{A}{2}\).

\begin{defn}[Variables \(X^\rho_A\)]
Given a finite signature \(\rho\) on an index category \(\Ia\) and a finite set \(A\) we define
	\[
		X^\rho_A\coloneqq\bigcup_{\mathclap{N\in\ob(\Ia)}}\set[x_{N,a}]{a\in\rho_A(N)}.
	\]
\end{defn}

Given a commutative ring \(\mathbf{R}\) and a set \(X\) we write \(\mathbf{R}[X]\) to denote the free commutative unital \(\mathbf{R}\)-algebra generated by \(X\) and \(R[X]\) to denote that algebra's universe.

\begin{defn}[Monomial \(y_{\mathbf{A}}\)]
Given a finite signature \(\rho\) on an index category \(\Ia\), a finite set \(A\), and a structure \(\mathbf{A}\coloneqq(A,F)\in\struct^\rho_A\) we define \[y_{\mathbf{A}}\coloneqq\prod_{N\in\ob(\Ia)}\prod_{a\in F(N)}x_{N,a}.\]
\end{defn}

Note that there is always an empty structure of a given signature and hence one of the \(y_{\mathbf{A}}\) will always be \(1\).

\begin{defn}[Monomials \(Y^\rho_A\)]
Given a finite signature \(\rho\) on an index category \(\Ia\) and a finite set \(A\) we define \[Y^\rho_A\coloneqq\set[y_{\mathbf{A}}]{\mathbf{A}\in\struct^\rho_A}.\]
\end{defn}

\begin{defn}[\((\rho,A)\) polynomial algebra]
Given a commutative ring \(\mathbf{R}\), a finite signature \(\rho\), and a finite set \(A\) we define the \emph{\((\rho,A)\) polynomial algebra} over \(\mathbf{R}\) to be the subalgebra of \(\mathbf{R}[X^\rho_A]\) which is generated by \(Y^\rho_A\). We denote this algebra by \(\pola^\rho_A(\mathbf{R})\) and its universe by \(\pol^\rho_A(\mathbf{R})\).
\end{defn}

We omit the ring \(\mathbf{R}\) when we take \(\mathbf{R}\) to be \(\Z\). For example, we write \(\pola^\rho_A\) to indicate \(\pola^\rho_A(\Z)\).

By our previous comment that \(1\in Y^\rho_A\) polynomials in \(\pol^\rho_A(\mathbf{R})\) can have any nonzero constant term.

In order to prove the main result of this section we will need the following lemma on the factorization of monomials in \(\pola^\rho_A(\mathbf{R})\).

\begin{lem}
\label{lem:factorization_lemma}
Given \(y_{\mathbf{A}_1},\dots,y_{\mathbf{A}_k}\in Y^\rho_A\) we have that \[\prod_{i=1}^ky_{\mathbf{A}_i}=y_{\bigvee_{i=1}^k\mathbf{A}_i}\mu\] where \(\mu\in\pol^\rho_A\).
\end{lem}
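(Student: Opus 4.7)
The plan is to induct on $k$, with the crux being a two-factor identity. Explicitly, I will first establish that for any two structures $\mathbf{B}_1, \mathbf{B}_2 \in \struct^\rho_A$,
\[
y_{\mathbf{B}_1} \cdot y_{\mathbf{B}_2} = y_{\mathbf{B}_1 \vee \mathbf{B}_2} \cdot y_{\mathbf{B}_1 \wedge \mathbf{B}_2},
\]
which immediately yields the $k = 2$ case with $\mu = y_{\mathbf{B}_1 \wedge \mathbf{B}_2} \in Y^\rho_A \subseteq \pol^\rho_A$.

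To prove this two-factor identity, I would lean on the observation from the preceding subsection that the substructure lattice of $(A, \id_{\rho_A})$ is a sublattice of a Boolean lattice, so joins and meets are componentwise: writing $\mathbf{B}_j = (A, F_j)$, the join $\mathbf{B}_1 \vee \mathbf{B}_2$ has relations $F_1(N) \cup F_2(N)$ and the meet $\mathbf{B}_1 \wedge \mathbf{B}_2$ has relations $F_1(N) \cap F_2(N)$ for each $N \in \ob(\Ia)$. For any generator $x_{N,a}$, its exponent on the left of the identity counts how many of $F_1(N), F_2(N)$ contain $a$; its exponent on the right is the indicator of $F_1(N) \cup F_2(N)$ plus the indicator of $F_1(N) \cap F_2(N)$, which counts the same quantity in the three cases $a \in F_1 \cap F_2$, $a \in F_1 \triangle F_2$, and $a \notin F_1 \cup F_2$. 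Matching exponents on every variable establishes the identity.

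With the two-factor case in hand, induction on $k$ proceeds routinely. The base $k = 1$ takes $\mu = 1$, which is the monomial of the empty substructure. For $k > 1$, apply the inductive hypothesis to $\prod_{i=1}^{k-1} y_{\mathbf{A}_i}$ to write it as $y_{\bigvee_{i<k} \mathbf{A}_i} \cdot \mu'$ with $\mu' \in \pol^\rho_A$, then apply the two-factor identity to $\bigvee_{i<k} \mathbf{A}_i$ and $\mathbf{A}_k$; using associativity of the join, this gives $\prod_{i=1}^{k} y_{\mathbf{A}_i} = y_{\bigvee_{i=1}^{k} \mathbf{A}_i} \cdot \mu$ with $\mu = y_{(\bigvee_{i<k} \mathbf{A}_i) \wedge \mathbf{A}_k} \cdot \mu' \in \pol^\rho_A$. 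The only substantive step is the two-factor identity, and since $\subl$ is already known to sit inside a Boolean lattice, its verification reduces to an exponent count and is not really an obstacle at all; the work has been done in advance by the lattice-theoretic remarks of the previous subsection.
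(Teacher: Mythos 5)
Your proposal is correct and follows essentially the same route as the paper: induction on \(k\) with the two-factor identity \(y_{\mathbf{B}_1}y_{\mathbf{B}_2}=y_{\mathbf{B}_1\vee\mathbf{B}_2}y_{\mathbf{B}_1\wedge\mathbf{B}_2}\) as the crux, the paper likewise taking \(\mu=y_{\mathbf{A}_1\wedge\mathbf{A}_2}\) in the \(k=2\) case and chaining via the join's associativity. The only difference is that you spell out the exponent-counting verification of the two-factor identity, which the paper leaves implicit.
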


\begin{proof}
We induct on the number of factors \(k\). When \(k=1\) we can take \(\mu=1\) and when \(k=2\) we can take \(\mu=y_{\mathbf{A}_1\wedge\mathbf{A}_2}\). Take \(k\ge3\) and suppose that we have the result for all \(k'<k\). In this case we observe that
	\begin{align*}
		\left(\prod_{i=1}^{k-1}y_{\mathbf{A}_i}\right)y_{\mathbf{A}_k} &= \left(y_{\bigvee_{i=1}^{k-1}\mathbf{A}_i}\mu\right)y_{\mathbf{A}_k} \\
		&= \left(y_{\bigvee_{i=1}^{k-1}\mathbf{A}_i}y_{\mathbf{A}_k}\right)\mu \\
		&= \left(y_{\left(\bigvee_{i=1}^{k-1}\mathbf{A}_i\right)\vee\mathbf{A}_k}\mu'\right)\mu \\
		&= y_{\bigvee_{i=1}^k\mathbf{A}_i}\mu\mu'.
	\end{align*}
Since \(\mu,\mu'\in\pol^\rho_A\) we have that \(\mu\mu'\in\pol^\rho_A\), as well.
\end{proof}

We have a natural action of \(\mathbf{\Sigma}_A\) on \(\mathbf{R}[X^\rho_A]\).

\begin{defn}[Action \(\upsilon\)]
We define a group action \(\upsilon\colon\mathbf{\Sigma}_A\to\autg(\mathbf{R}[X^\rho_A])\) by setting \((\upsilon(\sigma))(x_{N,a})\coloneqq x_{N,(\rho_\sigma(N))(a)}\) and extending.
\end{defn}

\begin{defn}[Symmetric polynomial]
A polynomial \(p\in\pol^\rho_A(\mathbf{R})\) is called \emph{symmetric} when for every \(\sigma\in\Sigma_A\) we have that \((\upsilon(\sigma))(p)=p\).
\end{defn}

\begin{defn}[\((\rho,A)\) symmetric polynomial algebra]
We denote by \(\sympol^\rho_A(\mathbf{R})\) the set of symmetric polynomials in \(\pol^\rho_A(\mathbf{R})\) and we denote by \(\sympola^\rho_A(\mathbf{R})\) the subalgebra of \(\pola^\rho_A(\mathbf{R})\) with universe \(\sympol^\rho_A(\mathbf{R})\), which we refer to as the \emph{\((\rho,A)\) symmetric polynomial algebra} over \(\mathbf{R}\).
\end{defn}

Of particular interest are a family of polynomials arising from the isomorphism classes of members of \(\struct^\rho_A\).

\begin{defn}[Action \(\zeta\)]
We define a group action \(\zeta\colon\mathbf{\Sigma}_A\to\mathbf{\Sigma}_{\struct^\rho_A}\) by \[(\zeta(\sigma))(A,F)\coloneqq(A,\rho_\sigma\circ F).\]
\end{defn}

This action is well-defined as a change in representative \(F\) doesn't change the equivalence class of monomorphisms to which \(\rho_\sigma\circ F\) belongs.

\begin{defn}[Isomorphism classes of structures]
We define \[\isostr^\rho_A\coloneqq\set[\orb_\zeta(\mathbf{A})]{\mathbf{A}\in\struct^\rho_A}.\]
\end{defn}

\begin{defn}[Elementary symmetric polynomial]
Given a finite signature \(\rho\), a finite set \(A\), and an isomorphism class \(\psi\in\isostr^\rho_A\) we define the \emph{elementary symmetric polynomial} of \(\psi\) to be \[s_\psi\coloneqq\sum_{\mathbf{A}\in\psi}y_{\mathbf{A}}.\]
\end{defn}

\begin{defn}[Polynomials \(S^\rho_A\)]
Given a finite signature \(\rho\) and a finite set \(A\) we define \[S^\rho_A\coloneqq\set[s_\psi]{\psi\in\isostr^\rho_A}.\]
\end{defn}

\begin{prop}
The elementary symmetric polynomials are symmetric polynomials.
\end{prop}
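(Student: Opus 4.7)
The plan is to show that the action $\upsilon$ on monomials $y_{\mathbf{A}}$ is intertwined with the action $\zeta$ on structures, and then use the orbit structure of $\psi$ to conclude that each $s_\psi$ is fixed by $\upsilon(\sigma)$ for every $\sigma \in \Sigma_A$.

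First I would unpack the definitions to prove the key identity
\[
	(\upsilon(\sigma))(y_{\mathbf{A}}) = y_{(\zeta(\sigma))(\mathbf{A})}
\]
for any $\mathbf{A} = (A,F) \in \struct^\rho_A$ and any $\sigma \in \Sigma_A$. On the left, since $\upsilon(\sigma)$ is an $\mathbf{R}$-algebra automorphism, it acts on $y_{\mathbf{A}} = \prod_{N} \prod_{a \in F(N)} x_{N,a}$ by sending each $x_{N,a}$ to $x_{N,(\rho_\sigma(N))(a)}$. On the right, $(\zeta(\sigma))(\mathbf{A}) = (A, \rho_\sigma \circ F)$, so its monomial is $\prod_{N} \prod_{a' \in (\rho_\sigma(N))(F(N))} x_{N,a'}$. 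Because each $\rho_\sigma(N)$ is a bijection of $\rho_A(N)$ (coming from a functor applied to an invertible $\sigma$), reindexing $a' = (\rho_\sigma(N))(a)$ matches the two products factor-for-factor.

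With the intertwining identity in hand, I would apply $\upsilon(\sigma)$ termwise to the defining sum:
\[
	(\upsilon(\sigma))(s_\psi) = \sum_{\mathbf{A} \in \psi} (\upsilon(\sigma))(y_{\mathbf{A}}) = \sum_{\mathbf{A} \in \psi} y_{(\zeta(\sigma))(\mathbf{A})}.
\]
Since $\psi$ is by definition a $\zeta$-orbit, the map $\mathbf{A} \mapsto (\zeta(\sigma))(\mathbf{A})$ is a bijection of $\psi$ onto itself, so reindexing the sum gives $s_\psi$ again.

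I don't expect any real obstacle here; the only subtle point is verifying that the symbolic substitution of variables performed by $\upsilon(\sigma)$ on the monomial really does correspond to the combinatorial pushforward of the relation $F(N)$ along $\rho_\sigma(N)$, which amounts to checking that $\rho_\sigma(N)$ is a bijection and that the product over $F(N)$ of images is the same as the product over the image set $(\rho_\sigma(N))(F(N))$.
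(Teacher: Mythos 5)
Your proposal is correct and follows essentially the same route as the paper: apply $\upsilon(\sigma)$ termwise, observe that $(\upsilon(\sigma))(y_{\mathbf{A}})=y_{(\zeta(\sigma))(\mathbf{A})}$, and reindex over the orbit $\psi$. The only (trivial) point the paper makes explicit that you omit is that $s_\psi\in\pol^\rho_A(\mathbf{R})$ because it is a sum of monomials from $Y^\rho_A$, which is required by the definition of a symmetric polynomial.
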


\begin{proof}
Let \(s_\psi\) be an elementary symmetric polynomial over \(\mathbf{R}\). Since \(s_\psi\) is a sum of monomials belonging to \(Y^\rho_A\) we have that \(s_\psi\in\pol^\rho_A(\mathbf{R})\). Take \(\sigma\in\Sigma_A\). We have that
	\begin{align*}
		(\upsilon(\sigma))(s_\psi) &= (\upsilon(\sigma))\left(\sum_{(A,F)\in\psi}\prod_{N\in\ob(\Ia)}\prod_{a\in F(N)}x_{N,a}\right) \\
		&= \sum_{(A,F)\in\psi}\prod_{N\in\ob(\Ia)}\prod_{a\in F(N)}(\upsilon(\sigma))(x_{N,a}) \\
		&= \sum_{(A,F)\in\psi}\prod_{N\in\ob(\Ia)}\prod_{a\in F(N)}x_{N,(\rho_\sigma(N))(a)} \\
		&= \sum_{\substack{(\zeta(\sigma))(A,F)\\(A,F)\in\psi}}\prod_{N\in\ob(\Ia)}\prod_{a\in(\rho_\sigma\circ F)(N)}x_{N,a} \\
		&= \sum_{(A,F)\in\psi}\prod_{N\in\ob(\Ia)}\prod_{a\in F(N)}x_{N,a} \\
		&= s_\psi,
	\end{align*}
as claimed.
\end{proof}

\subsection{A generating set}
We introduce a notion of weight for polynomials in our context and prove our main theorem on symmetric polynomials.

\begin{defn}[Magnitude of a structure]
Given a finite structure \(\mathbf{A}\coloneqq(A,F)\in\struct^\rho_A\) we define the \emph{magnitude} of \(\mathbf{A}\) to be \[\norm{\mathbf{A}}\coloneqq\sum_{N\in\ob(\Ia)}\abs{F(N)}.\]
\end{defn}

\begin{defn}[Magnitude of an isomorphism class]
Given \(\psi\in\isostr^\rho_A\) we define the \emph{magnitude} of \(\psi\) to be \(\norm{\psi}\coloneqq\norm{\mathbf{A}}\) for any \(\mathbf{A}\in\psi\).
\end{defn}

Since isomorphic structures have the same magnitude \(\norm{\psi}\) is well-defined.

\begin{prop}
We have that \(s_\psi\) is homogeneous of degree \(\norm{\psi}\).
\end{prop}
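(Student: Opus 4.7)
The plan is to compute the degree of each monomial appearing in $s_\psi$ directly from the definitions and observe that they all agree.

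First I would unpack the definition of $y_{\mathbf{A}}$ for a single structure $\mathbf{A} = (A,F) \in \struct^\rho_A$. Since
\[
	y_{\mathbf{A}} = \prod_{N \in \ob(\Ia)} \prod_{a \in F(N)} x_{N,a}
\]
is a product of distinct variables from $X^\rho_A$ (one for each pair $(N,a)$ with $a \in F(N)$), its total degree as a monomial in $\mathbf{R}[X^\rho_A]$ equals the number of factors, namely $\sum_{N \in \ob(\Ia)} \abs{F(N)} = \norm{\mathbf{A}}$.

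Next I would invoke the fact, noted just before the statement, that $\norm{\psi}$ is well-defined: isomorphic structures $\mathbf{A}, \mathbf{A}' \in \psi$ satisfy $\norm{\mathbf{A}} = \norm{\mathbf{A}'} = \norm{\psi}$. Combined with the previous paragraph, this shows that every monomial $y_{\mathbf{A}}$ appearing in the sum $s_\psi = \sum_{\mathbf{A} \in \psi} y_{\mathbf{A}}$ has total degree exactly $\norm{\psi}$. Therefore $s_\psi$ is a sum of monomials of a common degree $\norm{\psi}$, i.e.\ homogeneous of that degree.

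There is no real obstacle here; the statement is essentially a bookkeeping exercise confirming that the two uses of the word \emph{magnitude}---the combinatorial one counting relation-entries and the algebraic one counting the degree of the associated monomial---coincide. The only thing worth checking carefully is that the double product defining $y_{\mathbf{A}}$ really yields a squarefree monomial of the claimed degree, which follows because the indexing pairs $(N,a)$ with $a \in F(N)$ are distinct across $N \in \ob(\Ia)$ and thus the variables $x_{N,a}$ are pairwise distinct.
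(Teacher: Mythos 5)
Your proof is correct and follows the same route as the paper's: observe that each monomial $y_{\mathbf{A}}$ has degree $\sum_{N}\abs{F(N)}=\norm{\mathbf{A}}=\norm{\psi}$, so the sum is homogeneous. You simply spell out the degree count and the distinctness of the variables in more detail than the paper does.
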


\begin{proof}
Observe that \(s_\psi\) is a sum of monomials, one for each member \(\mathbf{A}\) of \(\psi\). Each of these monomials have degree \(\norm{\mathbf{A}}=\norm{\psi}\).
\end{proof}

\begin{defn}[Variables \(Z^\rho_A\)]
Given a finite signature \(\rho\) on an index category \(\Ia\) and a finite set \(A\) we define \[Z^\rho_A\coloneqq\set[z_\psi]{\psi\in\isostr^\rho_A}.\]
\end{defn}

\begin{defn}[Weight of a monomial]
The \emph{weight} of a monomial \(\prod_\psi z_\psi^{d_\psi}\) in \(R[Z^\rho_A]\) is defined to be \(\sum_\psi\norm{\psi}d_\psi\).
\end{defn}

\begin{defn}[Weight of a polynomial]
The weight of a polynomial \(p\in R[Z^\rho_A]\) is the maximum of the weights of the monomials appearing in \(p\).
\end{defn}

We generalize a statement of Hilbert by showing that the elementary symmetric polynomials generate the algebra of symmetric polynomials. We follow Lang's treatment\cite[p.191]{lang2005}.

\begin{thm}
\label{thm:elementary_symmetric_generate}
Given a polynomial \(f\in\sympol^\rho_A(\mathbf{R})\) of degree \(d\) there exists a polynomial \(g\in R[Z^\rho_A]\) of weight at most \(d\) such that \(f=g|_{Z^\rho_A=S^\rho_A}\).
\end{thm}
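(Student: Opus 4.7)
The plan is to follow Lang's proof of Hilbert's theorem \cite[p.191]{lang2005}, substituting \autoref{lem:factorization_lemma} for the usual fact that products of distinct variables are squarefree. First I would reduce to the homogeneous case: since $\upsilon$ preserves degree, each graded piece of a symmetric polynomial is again symmetric, so it suffices to produce a weight-$i$ polynomial $g_i$ for the degree-$i$ piece $f_i$ and sum to obtain a $g$ of weight at most $d$.

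Having reduced to homogeneous $f$ of degree $d$, I would fix a total order on the variables $X^\rho_A$ and use the induced multiplicative lexicographic order on monomials of $\mathbf{R}[X^\rho_A]$, calling the largest monomial appearing in a polynomial its \emph{leading} monomial. The leading monomial of $s_\psi$ is then $y_{\mathbf{A}^*_\psi}$, where $\mathbf{A}^*_\psi \in \psi$ is the representative whose $y$-monomial is largest, and by multiplicativity of the order the leading monomial of $\prod_i s_{\psi_i}$ is $\prod_i y_{\mathbf{A}^*_{\psi_i}}$ with coefficient $1$. Given the leading monomial $m$ of $f$ with coefficient $c \in R$, symmetry of $f$ together with the fact that every monomial in $\pol^\rho_A(\mathbf{R})$ decomposes as a product of $y$'s force $m$ to be of the form $\prod_i y_{\mathbf{A}^*_{\psi_i}}$ for some multiset of isomorphism classes $(\psi_1, \dots, \psi_k)$ with $\sum_i \norm{\psi_i} = d$. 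Subtracting $c \cdot \prod_i s_{\psi_i}$ from $f$ then produces a symmetric polynomial of degree at most $d$ with strictly smaller leading monomial; iterating, which terminates because $\struct^\rho_A$ is finite, and accumulating the contributions $c \cdot \prod_i z_{\psi_i}$ (monomials of weight exactly $d$) produces $g \in R[Z^\rho_A]$ of weight at most $d$.

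The main obstacle I expect is the combinatorial identification of the leading monomial $m$ with its canonical product-of-$y$'s decomposition. In Lang's classical argument this step is done via the observation that the sorted exponent vector $(a_1 \geq \cdots \geq a_n)$ of the leading monomial of a symmetric polynomial admits a unique decomposition as $\prod_i (x_1 \cdots x_i)^{c_i}$; here, because the leading monomials of the $s_\psi$ range over an entire catalogue of isomorphism classes rather than the single chain $x_1, x_1 x_2, \dots, x_1 \cdots x_n$, pinning down the decomposition requires more care. This is where \autoref{lem:factorization_lemma} enters: products $\prod y_{\mathbf{A}_i}$ with non-disjoint factors contribute correction factors $\mu$ of positive degree, producing sub-leading terms in $\prod s_{\psi_i}$ that must be tracked through the induction but do not disturb the leading-monomial analysis.
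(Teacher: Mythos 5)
Your proposal replaces the paper's argument (Lang's double induction on \(\abs{A}\) and on degree, via the substitution \(A_n=0\) and \autoref{lem:factorization_lemma}) with the other classical proof of the symmetric function theorem: a lexicographic leading-monomial descent. The reduction to the homogeneous case and the computation of the leading term of \(\prod_i s_{\psi_i}\) are fine, but the step you yourself flag as the main obstacle is a genuine gap, and the claim you need there is false: the leading monomial of a symmetric polynomial in \(\sympol^\rho_A(\mathbf{R})\) need not admit \emph{any} factorization into monomials \(y_{\mathbf{A}^*_\psi}\) with each factor the lex-maximal representative of its isomorphism class. Consequently the first subtraction step of your descent cannot always be performed, and tracking the correction factors \(\mu\) from \autoref{lem:factorization_lemma} does not repair this, since the problem is not sub-leading noise in \(\prod_i s_{\psi_i}\) but the nonexistence of a suitable product to subtract.

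Here is a concrete obstruction. Take the Cartesian signature with a single binary relation on \(A=\set{1,2,3}\), write \(x_{ab}\) for the variable indexed by the pair \((a,b)\), and order the variables with \(x_{12}>x_{23}>x_{31}>\cdots\). Let \(\mathbf{B}\coloneqq\set{(1,2),(3,1)}\) and set \(f\coloneqq\sum_{\tau\in\Sigma_A}(\upsilon(\tau))(y_{\set{(1,2)}}^4\,y_{\mathbf{B}})\), a symmetric element of \(\pol^\rho_A(\mathbf{R})\) of degree \(6\). Its leading monomial is \(m=x_{12}^5x_{31}\) with coefficient \(1\): for \(\tau\neq\id\) the exponent of \(x_{12}\) in \((\upsilon(\tau))(m)\) drops from \(5\) to at most \(1\), so \(m\) is strictly orbit-maximal. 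Any factorization of \(m\) into elements of \(Y^\rho_A\) must contain exactly one factor whose structure includes the edge \((3,1)\), hence must use \(y_{\set{(3,1)}}\) or \(y_{\mathbf{B}}\); but neither \(\set{(3,1)}\) nor \(\mathbf{B}\) is lex-maximal in its isomorphism class (the maximal representatives are \(\set{(1,2)}\) and \(\set{(1,2),(2,3)}\), since \(x_{12}>x_{31}\) and \(x_{12}x_{23}>x_{12}x_{31}\)). So \(m\) is not the leading monomial of any product \(\prod_i s_{\psi_i}\), and the greedy descent halts immediately even though the theorem is true for this \(f\). The structural point is that orbit-maximality of a monomial with multiplicities does not force its support (the join appearing in \autoref{lem:factorization_lemma}) to be class-maximal: lex order privileges high powers of large variables over the squarefree support, a discrepancy invisible in the classical case where orbit-maximal means sorted exponents and the layers are automatically the prefixes \(x_1\cdots x_k\). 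To salvage your route you would need a multiplicative monomial order for which orbit-maximal monomials do decompose into class-maximal layers, and it is not clear one exists; the paper's proof avoids leading monomials altogether.
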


\begin{proof}
Define \(n\coloneqq\abs{A}\). We induct on \(n\). When \(n=0\) we have that \(\mathbf{\Sigma}_A\) is trivial and hence each class in \(\isostr^\rho_A\) contains a unique member. It follows that \(\sympol^\rho_A(\mathbf{R})=\pol^\rho_A(\mathbf{R})\) and the \(Y^\rho_A\) are precisely the elementary symmetric polynomials. The polynomial \(g\) can be obtained from \(f\) by replacing each occurrence of a monomial \(y_{\mathbf{A}}\) in a term of \(f\) with the corresponding singleton orbit variable \(z_{\set{\mathbf{A}}}\). By definition of the weight of a polynomial this choice of \(g\) will have weight precisely \(d\).

Suppose that \(n>0\) and that we have the result for \(n-1\). We induct on \(d\). Put a total order on \(A\) so that \(A=\set{a_1,\dots,a_n}\). Define \(B\coloneqq A\setminus\set{a_n}\) and define \(\iota\colon B\to A\) to be inclusion given by \(\iota(a_i)\coloneqq a_i\). For each \(N\in\ob(\Ia)\) this map induces an inclusion \(\rho_\iota(N)\colon\rho_B(N)\to\rho_A(N)\). Define
	\[
		A_n\coloneqq\bigcup_{\mathclap{N\in\ob(\Ia)}}\set[x_{N,a}]{a\in\rho_A(N)\setminus\im(\rho_\iota(N))}
	\]
to be the collection of variables in \(X^\rho_A\) depending on \(a_n\). We have that \(f|_{A_n=0}\in\sympol^\rho_B(\mathbf{R})\) so there exists some \(g_1\in R[Z^\rho_B]\) of weight at most \(d\) such that \(f|_{A_n=0}=g_1|_{Z^\rho_B=S^\rho_B}\). Note that for each \(s_\psi\in S^\rho_B\) there is a unique member \(s'_\psi\in S^\rho_A\) such that \(s_\psi=(s'_\psi)|_{A_n=0}\). By the inclusion induced by \(\iota\) identify \(g_1\) with a polynomial, which we will also call \(g_1\), belonging to \(R[Z^\rho_A]\). We find that \(f|_{A_n=0}=(g_1|_{Z^\rho_A=S^\rho_A})|_{A_n=0}\). Define \(f_1\coloneqq f-g_1|_{Z^\rho_A=S^\rho_A}\). Observe that \(f_1\) has degree at most \(d\) and is symmetric.

By applying our lemma on the factorization of monomials we can write \(f_1\) uniquely as
	\[
		f_1=\sum_{\mathclap{\mathbf{A}\in\struct^\rho_A}}y_{\mathbf{A}}p_{\mathbf{A}}
	\]
where each \(y_{\mathbf{A}}\) is the monomial factor guaranteed by that lemma. Since \(f_1\) has no constant term each \(p_{\mathbf{A}}\in\pol^\rho_A(\mathbf{A})\) has degree strictly less than \(d\). Since \(f_1\) is symmetric the application of some \(\sigma\in\Sigma_A\) would appear to give us a different such expression for \(f_1\). It follows that if \(\mathbf{A}_1\cong\mathbf{A}_2\) then \(p_{\mathbf{A}_1}=p_{\mathbf{A}_2}\). We can then collect terms to obtain \(f_1=\sum_{\psi\in\isostr^\rho_A}s_\psi p_\psi\) where \(p_\psi=p_{\mathbf{A}}\) for any \(\mathbf{A}\in\psi\). Applying the inductive hypothesis to the \(p_{\mathbf{A}}\) we obtain \(p_{\mathbf{A}}=(g_{\mathbf{A}})|_{Z^\rho_A=S^\rho_A}\) where each \(g_{\mathbf{A}}\) has weight at most \(d-\norm{\psi}\). It follows that \(f_1\) can be written as a polynomial \(g_2\) in the elementary symmetric polynomials of weight at most \(d\). That is, there exists some \(g_2\in R[Z^\rho_A]\) of weight at most \(d\) such that \(f_1=g_2|_{Z^\rho_A=S^\rho_A}\). This implies that \(f=(g_1+g_2)|_{Z^\rho_A=S^\rho_A}\) where \(g_1+g_2\in R[Z^\rho_A]\) has weight at most \(d\).
\end{proof}

Although it happens that \((f_1)|_{A_n=0}=0\) and hence each term in \(f_1\) is divisible by some element of \(A_n\) we did not need to use this fact. We can make an observation analogous to the one in the proof in Lang (loc. cit.) in this direction, which is that by symmetry each term in \(f_1\) must be divisible by some element of \(A_i\) for each \(i\). This suffices in that special case because there is a minimal monomial with this property.

Most of our definitions and arguments go through if we use a suitably finite signature \(\rho\colon\Ia\to\funcat(\setcat,\setcat^\op)\) instead. If a similar result for this class of structures is to be proved then we must make a change at the point where we take the induced map \(\rho_\iota(N)\), for this will now give us a morphism in \(\setcat^\op\) whose corresponding map in \(\setcat\) is one taking members of \(\rho_A(N)\) to members of \(\rho_B(N)\).

It is not the case in general that the symmetric polynomials \(S^\rho_A\) generate
    \[
        \sympola^\rho_A(\mathbf{R})
    \]
freely. We give a specific example of a nontrivial algebraic relation between elementary symmetric polynomials in the next section.

\subsection{Example: domain digraphs}
\label{subsec:domain_digraph}
Most of the categories of structures with which we are already acquainted have no nontrivial relators. We consider structures with a relator which is not an identity or isomorphism in order to get a flavor of the general case.

\begin{defn}[Domain digraph]
A \emph{domain digraph} with universe \(A\) consists of some \(E\subset A^2\) and some \(W\subset A\) such that for each \((a_0,a_1)\in E\) we have that \(\pi(a_0,a_1)=a_0\in W\).
\end{defn}

We can visualize this as a digraph \(E\) on a set of vertices \(A\) where a subset \(W\subset A\) of \emph{domain vertices} are marked. Each edge in \(E\) must have its source vertex in \(W\), although in general a domain vertex need not be the source of any edge in \(E\). We will denote a domain digraph \(\mathbf{A}\) with universe \(A\), edge set \(E\), and domain vertex set \(W\) by \(\mathbf{A}\coloneqq(A,E,W)\).

Domain digraphs can be construed as structures in our formal sense where there are two basic relations and a morphism between them in the index category \(\Ia\).

We give an example where the elementary symmetric polynomials \(S^\rho_A\) are algebraically dependent. Take \(A\coloneqq\set{x_0,x_1}\). In this case we have that \(A^2=\set{x_{00},x_{01},x_{10},x_{11}}\). Observe that \[X^\rho_A=\set{x_0,x_1,x_{00},x_{01},x_{10},x_{11}}\] and
	\begin{align*}
		Y^\rho_A=\{ & 1,x_0,x_1,x_0x_1, \\
        & x_{00}x_0,x_{00}x_0x_1,x_{01}x_0,x_{01}x_0x_1,x_{10}x_1,x_{10}x_0x_1,x_{11}x_1,x_{11}x_0x_1, \\
        & x_{00}x_{01}x_0,x_{00}x_{01}x_0x_1,x_{00}x_{10}x_0x_1,x_{00}x_{11}x_0x_1, \\
        & x_{01}x_{10}x_0x_1,x_{01}x_{11}x_0x_1,x_{10}x_{11}x_1,x_{10}x_{11}x_0x_1, \\
        & x_{00}x_{01}x_{10}x_0x_1,x_{00}x_{01}x_{11}x_0x_1,x_{00}x_{10}x_{11}x_0x_1,x_{01}x_{10}x_{11}x_0x_1, \\
        & x_{00}x_{01}x_{10}x_{11}x_0x_1\}.
	\end{align*}
We find that
	\begin{align*}
		S^\rho_A = \{1,s_0,s_{0,1},s_{00,0},s_{00,0,1},s_{01,0},\dots\}.
	\end{align*}
One example of an algebraic dependence between the elementary symmetric polynomials is
	\begin{align*}
		s_{00,0}s_{01,0} &= (x_{00}x_0+x_{11}x_1)(x_{01}x_0+x_{10}x_1) \\
		&= x_{00}x_{01}x_0^2+x_{00}x_{10}x_0x_1+x_{01}x_{11}x_0x_1+x_{10}x_{11}x_1^2 \\
		&= (x_{00}x_{01}x_0+x_{10}x_{11}x_1)(x_0+x_1)-(x_{00}x_{01}x_0x_1+x_{10}x_{11}x_0x_1)+ \\
        &{\hspace{1cm}} (x_{00}x_{10}x_0x_1+x_{01}x_{11}x_0x_1) \\
		&= s_{00,01,0}s_0-s_{00,01,0,1}+s_{00,10,0,1}.
	\end{align*}
More succinctly, we have \[s_{00,0}s_{01,0}-s_{00,01,0}s_0+s_{00,01,0,1}-s_{00,10,0,1}=0.\] This situation is different from that of the classical elementary symmetric polynomials, which are algebraically independent.

\section{Logic by counting}
\label{sec:logic}
We describe how to use \autoref{thm:elementary_symmetric_generate} to check whether a first-order property holds for a given finite structure. In particular, we will see that we can verify whether any such first-order property holds for a finite structure by merely counting how many isomorphic copies of small substructures it contains and then evaluating a polynomial with integer coefficients at these values. It will turn out that ``small'' can be taken to mean either ``expressible by a low-complexity formula'' or ``verifiable locally'', and that these meanings always coincide.

In order to do this, we describe how to translate formulas in first-order logic into polynomials.

\begin{defn}[Kindred language]
Given a finite kinship class \(\struct^\rho_A\) the \emph{kindred language} \(\mathcal{L}^\rho_A\) is the negation-free fragment of the first-order language whose atomic formulae are those of the form \(\gamma(N,a)\) where \(N\in\ob(\Ia)\) and \(a\in\rho_A(N)\).
\end{defn}

That is, formulae from \(\mathcal{L}^\rho_A\) consist of those atoms \(\gamma(N,a)\) along with those formulae which can be formed by conjunction, disjunction, and quantification (either universal or existential) over the arguments of known such formulae.

We interpret the formula \(\gamma(N,a)\) as indicating the statement \(a\in F(N)\) for some structure \(\mathbf{A}\coloneqq(A,F)\) from the finite kinship class \(\struct^\rho_A\). That is, we say that
    \[
        \mathbf{A}\coloneqq(A,F)\models\Gamma\in\mathcal{L}^\rho_A
    \]
when the sentence \(\Gamma^{\mathbf{A}}\) obtained by replacing each instance of \(\gamma(N,a)\) with \(a\in F(N)\) in \(\Gamma\) holds. Note that this only makes sense for a choice of \(F\) such that \(F(N)\subset\rho_A(N)\), but this is the natural choice for \(F\) in the context of finite structures, in any case.

Note that our abnegation of negation in the language \(\mathcal{L}^\rho_A\) does not reduce our expressiveness from general first-order logic in the sense that given any property \(P\) of structures in \(\struct^\rho_A\), we can express ``\(\mathbf{A}\) is among the structures in \(\struct^\rho_A\) which satisfy property \(P\)'' in \(\mathcal{L}^\rho_A\) as
    \[
        \bigvee_{\mathclap{(A,F)\in P'}}\forall_{N\in\ob(\Ia)}\forall_{a\in F(N)}\gamma(N,a)
    \]
where \(P'\) is the set of unique representatives of the structures from \(\struct^\rho_A\) which have property \(P\) obtained by taking \(F(N)\) to be a subset of \(\rho_A(N)\) in each instance.

On a similar note, we lose nothing by dropping the universal and existential quantifiers, as well, since we are only quantifying over finite sets and we can always replace the quantifier with an appropriate conjunction or disjunction. Nevertheless, we will retain the quantifiers for the sake of readability.

We now give the map taking sentences in \(\mathcal{L}^\rho_A\) to \(\Z[X^\rho_A]\).

\begin{defn}[Polynomial realization]
The \emph{polynomial realization} function
    \[
        \xi\colon\mathcal{L}^\rho_A\to\Z[X^\rho_A]
    \]
is given by
    \[
        \xi(\Gamma)\coloneqq
            \begin{cases}
                x_{N,a} &\text{when } \Gamma=\gamma(N,a) \\
                \xi(\Gamma_1)+\xi(\Gamma_2) &\text{when } \Gamma=\Gamma_1\vee\Gamma_2 \\
                \xi(\Gamma_1)\xi(\Gamma_2) &\text{when } \Gamma=\Gamma_1\wedge\Gamma_2 \\
                \sum_{a\in\rho_A(N)}\xi(\Gamma'(N,a)) &\text{when } \Gamma=\exists_{a\in\rho_A(N)}\Gamma'(N,a) \\
                \prod_{a\in\rho_A(N)}\xi(\Gamma'(N,a)) &\text{when } \Gamma=\forall_{a\in\rho_A(N)}\Gamma'(N,a).
            \end{cases}
    \]
\end{defn}

Note that in general we don't have that \(\xi(\Gamma)\in\pol^\rho_A\). Consider the example of domain digraphs from \autoref{subsec:domain_digraph}. Given the universe \(A\coloneqq\set{x_0,x_1}\) from that example, we can have that
    \[
        \Gamma^{\mathbf{A}}=(x_{10}\in W)
    \]
and hence
    \[
        \xi(\Gamma)=x_{10},
    \]
but this monomial does not belong to \(\pol^\rho_A\) since it cannot be realized by a structure from \(\struct^\rho_A\). (Recall that, for domain digraphs, any monomial containing \(x_{10}\) must also have \(x_1\) as a factor.)

We adopt some more language in light of this map. We say that \(\Gamma\in\mathcal{L}^\rho_A\) is \emph{structural} when \(\xi(\Gamma)\in\pol^\rho_A\). Our preceding example of a formula \(\Gamma\) for the kindred language of domain digraphs with universe \(\set{x_0,x_1}\) was not structural. Such formulae are among those which can never be modeled by any of the structures in the relevant kinship class.

Finally, we will say that \(\Gamma\in\mathcal{L}^\rho_A\) is \emph{isomorphism-invariant} when \(\xi(\Gamma)\in\sympol^\rho_A\). Clearly, a formula may be structural without being isomorphism invariant. For instance, the property that a group with universe \(\set{a,b}\) has \(a\) as its identity element can be expressed by a structural formula, but not an isomorphism-invariant one.

We will say that a property \(P\) of structures from \(\struct^\rho_A\) is \emph{isomorphism-invariant} when there exists some \(\Gamma\in\mathcal{L}^\rho_A\) which is isomorphism-invariant in the above sense and for which \(\mathbf{A}\) satisfies \(P\) exactly when \(\mathbf{A}\models\Gamma\). Note that by our previous observations, this coincides with our informal notion of an isomorphism invariant property (i.e. isomorphism-invariant subset of \(\struct^\rho_A\)).

There is an evident evaluation map which takes polynomials from \(\mathbf{R}[X^\rho_A]\) to elements of \(R\).

\begin{defn}
Given any polynomial \(p\in\mathbf{R}[X^\rho_A]\) and a structure \(\mathbf{A}\in\struct^\rho_A\) we define \(p(\mathbf{A})\) by extension of the rule
    \[
        x_{N,a}(\mathbf{A})\coloneqq
            \begin{cases}
                1 &\text{when } \mathbf{A}\models\gamma(N,a) \\
                0 &\text{otherwise}
            \end{cases}
            .
    \]
\end{defn}

Before we reach the punchline of this section, we need one lemma.

\begin{lem}
\label{lem:positive_realization}
Given \(\Gamma\in\mathcal{L}^\rho_A\) and a structure \(\mathbf{A}\in\struct^\rho_A\), we have that \(\mathbf{A}\models\Gamma\) if and only if \((\xi(\Gamma))(\mathbf{A})>0\).
\end{lem}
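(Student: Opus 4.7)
The plan is to proceed by structural induction on the formula $\Gamma \in \mathcal{L}^\rho_A$. The crucial preliminary observation is that because $\mathcal{L}^\rho_A$ is negation-free, the map $\xi$ only ever builds polynomials using addition and multiplication, so for any structure $\mathbf{A}$ and any $\Gamma \in \mathcal{L}^\rho_A$ the value $(\xi(\Gamma))(\mathbf{A})$ is a nonnegative integer (it is a sum of products of variables $x_{N,a}(\mathbf{A}) \in \{0,1\}$). The biconditional then reduces to showing that positivity of a nonnegative integer corresponds exactly to the semantic truth of the formula, which is well-behaved under sums, products, and the finite quantifier clauses.

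For the base case, $\Gamma = \gamma(N,a)$, the equivalence is immediate from the definition of $x_{N,a}(\mathbf{A})$. For the inductive step I would handle the four composite cases in turn. When $\Gamma = \Gamma_1 \vee \Gamma_2$, the value $(\xi(\Gamma))(\mathbf{A}) = (\xi(\Gamma_1))(\mathbf{A}) + (\xi(\Gamma_2))(\mathbf{A})$ is a sum of two nonnegative integers, hence positive if and only if at least one summand is positive; by the inductive hypothesis this matches $\mathbf{A} \models \Gamma_1 \vee \Gamma_2$. When $\Gamma = \Gamma_1 \wedge \Gamma_2$, the value is a product of two nonnegative integers and is positive if and only if both factors are positive, matching $\mathbf{A} \models \Gamma_1 \wedge \Gamma_2$. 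The existential case is an indexed sum and the universal case is an indexed product, and each is handled by exactly the same reasoning as the $\vee$ and $\wedge$ cases, now over the finite set $\rho_A(N)$.

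The main obstacle is essentially just bookkeeping: one needs to be careful that the statement as given applies uniformly to formulae with free parameters of the form $\gamma(N,a)$ (in particular for the quantifier clauses, where the induction is really on the subformula $\Gamma'(N,a)$ for each fixed $a \in \rho_A(N)$). To make the induction go through cleanly it is worth stating and proving the slightly stronger claim that nonnegativity of $(\xi(\Gamma))(\mathbf{A})$ holds for all $\Gamma \in \mathcal{L}^\rho_A$ alongside the biconditional, so that the $\vee$ and $\exists$ cases can invoke ``a sum of nonnegative integers is positive iff some summand is'' and the $\wedge$ and $\forall$ cases can invoke ``a product of nonnegative integers is positive iff every factor is'' without extra justification. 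Beyond this, the proof is a routine structural induction with no subtle points.
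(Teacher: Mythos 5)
Your proposal is correct and follows essentially the same route as the paper: a structural induction on \(\Gamma\), anchored by the observation that negation-freeness forces \((\xi(\Gamma))(\mathbf{A})\ge 0\), so that sums are positive iff some summand is and products are positive iff all factors are. The paper makes the same nonnegativity observation in its base case and likewise treats the quantifier clauses as indexed versions of the \(\vee\) and \(\wedge\) cases.
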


\begin{proof}
We induct on the structure of \(\Gamma\). The base case is that \(\Gamma=\gamma(N,a)\). Since \(\xi(\Gamma)=x_{N,a}\), we have that \((\xi(\Gamma))(\mathbf{A})>0\) exactly when \(\mathbf{A}\models\gamma(N,a)=\Gamma\). Observe that for any structure \(\mathbf{A}\) and any formula \(\Gamma\) we have that \((\xi(\Gamma))(\mathbf{A})\ge0\).

When \(\Gamma=\Gamma_1\vee\Gamma_2\) we have that
    \[
        \xi(\Gamma)=\xi(\Gamma_1)+\xi(\Gamma_2).
    \]
By our inductive hypothesis, \(\mathbf{A}\models\Gamma_1\) if and only if \((\xi(\Gamma_1))(\mathbf{A})>0\) and \(\mathbf{A}\models\Gamma_2\) if and only if \((\xi(\Gamma_2))(\mathbf{A})>0\). It follows that \(\mathbf{A}\models\Gamma\) when either \((\xi(\Gamma_1))(\mathbf{A})>0\) or \((\xi(\Gamma_2))(\mathbf{A})>0\). Since
    \[
        (\xi(\Gamma))(\mathbf{A})=(\xi(\Gamma_1))(\mathbf{A})+(\xi(\Gamma_2))(\mathbf{A})
    \]
we find that \(\mathbf{A}\models\Gamma\) if and only if \((\xi(\Gamma))(\mathbf{A})>0\).

When \(\Gamma=\Gamma_1\wedge\Gamma_2\) we have that
    \[
        \xi(\Gamma)=\xi(\Gamma_1)\xi(\Gamma_2).
    \]
By our inductive hypothesis, \(\mathbf{A}\models\Gamma_1\) if and only if \((\xi(\Gamma_1))(\mathbf{A})>0\) and \(\mathbf{A}\models\Gamma_2\) if and only if \((\xi(\Gamma_2))(\mathbf{A})>0\). It follows that \(\mathbf{A}\models\Gamma\) when both \((\xi(\Gamma_1))(\mathbf{A})>0\) and \((\xi(\Gamma_2))(\mathbf{A})>0\). Since
    \[
        (\xi(\Gamma))(\mathbf{A})=(\xi(\Gamma_1))(\mathbf{A})(\xi(\Gamma_2))(\mathbf{A})
    \]
we find that \(\mathbf{A}\models\Gamma\) if and only if \((\xi(\Gamma))(\mathbf{A})>0\).

The arguments for the cases where
    \[
        \Gamma=\exists_{a\in\rho_A(N)}\Gamma'(N,a)
    \]
and
    \[
        \Gamma=\forall_{a\in\rho_A(N)}\Gamma'(N,a)
    \]
are essentially identical to those for disjunctions and conjunctions.
\end{proof}

Now we can state our corollary of \autoref{thm:elementary_symmetric_generate}.

\begin{cor}
\label{cor:first_order}
Suppose that \(P\) is an isomorphism-invariant property of structures from \(\struct^\rho_A\) such that a structure satisfies \(P\) exactly when that structure models the isomorphism-invariant formula \(\Gamma\in\mathcal{L}^\rho_A\). Such a property can be seen to hold (or not) for a particular \(\mathbf{A}\in\struct^\rho_A\) by counting substructures of \(\mathbf{A}\) and evaluating a polynomial with integer coefficients at the resulting values. Moreover, the size of the substructures needed is bounded by the complexity of the formula \(\Gamma\).
\end{cor}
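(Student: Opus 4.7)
The plan is to assemble \autoref{cor:first_order} from the three ingredients already in place: the polynomial realization \(\xi\), \autoref{lem:positive_realization}, and \autoref{thm:elementary_symmetric_generate}. Given the isomorphism-invariant formula \(\Gamma\), first form the polynomial \(\xi(\Gamma)\in\Z[X^\rho_A]\). The hypothesis that \(P\) (equivalently, \(\Gamma\)) is isomorphism-invariant means by definition that \(\xi(\Gamma)\in\sympol^\rho_A\), so \autoref{thm:elementary_symmetric_generate} (applied with \(\mathbf{R}=\Z\)) yields an integer polynomial \(g\in\Z[Z^\rho_A]\) of weight at most \(d\coloneqq\deg(\xi(\Gamma))\) such that \(\xi(\Gamma)=g|_{Z^\rho_A=S^\rho_A}\).

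Next I would interpret \(g\) combinatorially. A direct unwinding of the definitions shows that for any \(\mathbf{B}\in\struct^\rho_A\) the monomial \(y_{\mathbf{A}}(\mathbf{B})\) evaluates to \(1\) or \(0\) according to whether \(\mathbf{A}\) is a substructure of \(\mathbf{B}\) (i.e.\ whether \(F_{\mathbf{A}}(N)\subseteq F_{\mathbf{B}}(N)\) for every \(N\)), and hence \(s_\psi(\mathbf{B})\) counts the number of substructures of \(\mathbf{B}\) belonging to the isomorphism class \(\psi\). Consequently the integer value \(g(s_{\psi_1}(\mathbf{B}),s_{\psi_2}(\mathbf{B}),\dots)\) is computed from \(\mathbf{B}\) by counting isomorphism types of substructures and evaluating a fixed polynomial with integer coefficients, and by \autoref{lem:positive_realization} we conclude \(\mathbf{B}\models\Gamma\) if and only if this integer is positive. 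That is exactly the statement ``checking \(P\) reduces to counting substructures and evaluating an integer polynomial.''

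For the size bound, observe that only those \(\psi\) with \(z_\psi\) appearing in \(g\) matter, and because \(g\) has weight at most \(d\) the inequality \(\sum_\psi\norm{\psi}d_\psi\le d\) forces every such \(\psi\) to satisfy \(\norm{\psi}\le d\). Thus the relevant substructures all have magnitude bounded by \(d\). It then remains to note that \(d=\deg(\xi(\Gamma))\) is itself controlled by the logical complexity of \(\Gamma\): tracing the recursive definition of \(\xi\), atoms have degree one, disjunction and existential quantification take a maximum, while conjunction and universal quantification add or multiply through finitely many summands, so \(d\) is bounded by an explicit function of the length and quantifier structure of \(\Gamma\).

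I do not expect a serious obstacle; the corollary is essentially a dictionary translation once \autoref{thm:elementary_symmetric_generate} is in hand. The one point that warrants care is the combinatorial interpretation of \(s_\psi(\mathbf{B})\) as a substructure count, since the evaluation rule is defined on the ambient algebra \(\mathbf{R}[X^\rho_A]\) and one must check that the indicator reading of \(y_{\mathbf{A}}\) genuinely matches the substructure relation from \autoref{sec:structures_naively}; this follows immediately from the concrete description of substructures of \((A,\id_{\rho_A})\) given there. Everything else is a direct invocation of the earlier results.
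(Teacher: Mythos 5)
Your proposal is correct and follows essentially the same route as the paper: apply \autoref{lem:positive_realization} to reduce \(\mathbf{A}\models\Gamma\) to positivity of \((\xi(\Gamma))(\mathbf{A})\), invoke \autoref{thm:elementary_symmetric_generate} to write \(\xi(\Gamma)=g|_{Z^\rho_A=S^\rho_A}\), and observe that \(s_\psi(\mathbf{A})\) counts substructures of \(\mathbf{A}\) in the class \(\psi\) (a step you justify slightly more explicitly than the paper does, via the indicator reading of \(y_{\mathbf{B}}\)). The only divergence is in making the size bound explicit: the paper puts \(\Gamma\) in disjunctive normal form with no repeated atoms and bounds \(d\) by the largest number of conjuncts, whereas you bound \(d\) by structural recursion on \(\xi\); both are legitimate ways of cashing out ``bounded by the complexity of \(\Gamma\).''
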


\begin{proof}
By \autoref{lem:positive_realization} we have that \(\mathbf{A}\models\Gamma\) if and only if \((\xi(\Gamma))(\mathbf{A})>0\). We will show that \((\xi(\Gamma))(\mathbf{A})\) may be computed by counting substructures of \(\mathbf{A}\). By assumption, \(\xi(\Gamma)\in\sympol^\rho_A\) and therefore by \autoref{thm:elementary_symmetric_generate} we have that
    \[
        \xi(\Gamma)=g|_{Z^\rho_A=S^\rho_A}
    \]
for some polynomial \(g\in\Z[Z^\rho_A]\) of weight at most \(d\), where \(d\) is the degree of \(\xi(\Gamma)\). The polynomial \(g\) is the claimed polynomial with integer coefficients. It remains to examine the arguments of \(g\).

Observe that for each \(s_\psi\in S^\rho_A\) we have that
    \[
        s_\psi(\mathbf{A})=\abs{\set[\mathbf{B}\in\psi]{\mathbf{B}\le\mathbf{A}}}.
    \]
Thus, we have that \((\xi(\Gamma))(\mathbf{A})\) can be computed by evaluating \(g\) at arguments which are the counts of substructures of \(\mathbf{A}\) belonging to various isomorphism classes.

We proceed to address our claim on a bound for the sizes of the substructures needed for this calculation. More explicitly, we can bound \(\norm{\psi}\) such that \(z_\psi\) appears to a positive power in a monomial of \(g\) with a nonzero coefficient. Suppose that \(\Gamma\) can be written in disjunctive normal form as
    \[
        \Gamma=\bigvee_{i=1}^m\bigwedge_{j=1}^{n_m}\gamma(N_{ij},a_{ij})
    \]
and that no atomic formula \(\gamma(N_{ij},a_{ij})\) is repeated in any of the conjunctions therein. (Were this the case, we could obtain an equivalent formula by removing those redundant terms, and this situation makes our bound easier to state.) It follows that \(g\) has weight at most
    \[
        d\le\max_{1\le i\le m}n_m,
    \]
and therefore we only need to consider substructures belonging to isomorphism classes \(\psi\) with
    \[
        \norm{\psi}\le\max_{1\le i\le m}n_m
    \]
when performing our counts \(s_\psi(\mathbf{A})\) for computing \((\xi(\Gamma))(\mathbf{A})\).
\end{proof}

Note that the preceding result is only interesting because it is possible to obtain a nontrivial bound on \(\norm{\psi}\) such that \(z_\psi\) appears in the polynomial \(g\). Consider the polynomial realization of
    \[
        \bigvee_{\mathclap{(A,F)\in P'}}\forall_{N\in\ob(\Ia)}\forall_{a\in F(N)}\gamma(N,a)
    \]
from our cheap example of a formula expressing ``\(\mathbf{A}\) is among the structures in \(\struct^\rho_A\) which satisfy property \(P\)'' given earlier. Applying the method of the preceding proof to that polynomial, we discover that we are merely counting (if one can call it that) whether \(\mathbf{A}\) is equal to any structure which satisfies property \(P\). Clearly this can be done, but it's not terribly illuminating.

One can verify that for some properties, like whether a graph \(\mathbf{A}\) contains a \(3\)-cycle, it is possible to choose a corresponding \(\Gamma\) for which \(\Gamma^\mathbf{A}\) is true when \(\mathbf{A}\) contains a \(3\)-cycle without using the above trick. Indeed, one can see that only subgraphs of order at most \(3\) need to be examined in order to check whether \(\mathbf{A}\) has a \(3\)-cycle, and that this will correspond to the availability of a low-complexity choice of \(\Gamma\) (in that \(\max_{1\le i\le m}n_m\) is only \(3\)).

Our comment about locality should also be clear from this example. Checking for embedded copies of a subgraph of size \(3\) is a local check, since we have a bound on the amount of the graph we have to examine at once. On the other hand, checking whether a graph is isomorphic to a given graph is significantly nonlocal, in the sense that examining large subgraphs may not yield a direct affirmation or denial of the property in question. We would expect a corresponding formula \(\Gamma\) to always have a high complexity in such cases.

\section{Structures, formally}
\label{sec:structures_formally}
Here we provide the formal definitions for the notion of a mathematical structure used throughout this paper.

\subsection{Definition of a structure}
Given categories \(\Ca\) and \(\Da\) we denote by
    \[
        \funcl(\Ca,\Da)
    \]
the class of functors from \(\Ca\) to \(\Da\) and we denote by
    \[
        \funcat(\Ca,\Da)
    \]
the functor category from \(\Ca\) to \(\Da\).

\begin{defn}[Presignature]
Given an index category \(\Ia\) and categories \(\Ca\) and \(\Da\) we refer to a functor \(\rho\colon\Ia\to\funcat(\Ca,\Da)\) as a \emph{presignature}.
\end{defn}

To each presignature we associate another functor. Given a category \(\Ca\) we write \(\ob(\Ca)\) to indicate the class of objects of \(\Ca\) and \(\mor(\Ca)\) to indicate the class of morphisms of \(\Ca\).

\begin{defn}[Extractor]
Given a presignature \(\rho\colon\Ia\to\funcat(\Ca,\Da)\) the \emph{extractor} \(\rho_{\_}\colon\Ca\to\funcat(\Ia,\Da)\) is defined as follows. For \(A\in\ob(\Ca)\) we define \(\rho_A\colon\Ia\to\Da\) by \(\rho_A(N)\coloneqq(\rho(N))(A)\) for each \(N\in\ob(\Ia)\) and \(\rho_A(\nu)\coloneqq(\rho(\nu))_A\) for each \(\nu\in\mor(\Ia)\). For each morphism \(h\colon A\to B\) in \(\Ca\) we define \(\rho_h\colon\rho_A\to\rho_B\) by \((\rho_h)_N\coloneqq(\rho(N))(h)\) for each \(N\in\ob(\Ia)\).
\end{defn}

\begin{prop}
\label{prop:extractor_is_functor}
The extractor \(\rho_{\_}\colon\Ca\to\funcat(\Ia,\Da)\) of a presignature \(\rho\colon\Ia\to\funcat(\Ca,\Da)\) is a functor.
\end{prop}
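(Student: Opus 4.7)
The plan is to unwind the definitions and verify, in order, (i) each $\rho_A$ is a functor $\Ia\to\Da$, (ii) each $\rho_h$ is a natural transformation $\rho_A\to\rho_B$, and then (iii) that $\rho_{\_}$ preserves identities and composition. Everything should follow mechanically from three facts: $\rho$ itself is a functor into $\funcat(\Ca,\Da)$, each $\rho(N)$ is a functor $\Ca\to\Da$, and composition in the functor category is componentwise.

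For (i), to see $\rho_A$ preserves identities, note $\rho_A(\id_N)=(\rho(\id_N))_A=(\id_{\rho(N)})_A=\id_{(\rho(N))(A)}=\id_{\rho_A(N)}$, using functoriality of $\rho$ and the fact that the identity natural transformation has identity components. For composition, given $\nu_1\colon N_1\to N_2$ and $\nu_2\colon N_2\to N_3$ in $\Ia$, I would compute
\[
	\rho_A(\nu_2\circ\nu_1)=(\rho(\nu_2\circ\nu_1))_A=(\rho(\nu_2)\circ\rho(\nu_1))_A=(\rho(\nu_2))_A\circ(\rho(\nu_1))_A=\rho_A(\nu_2)\circ\rho_A(\nu_1),
\]
using functoriality of $\rho$ and componentwise composition of natural transformations.

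For (ii), I would show the naturality square for $\rho_h$ against an arbitrary $\nu\colon N_1\to N_2$: after substituting the definitions $(\rho_h)_{N_i}=(\rho(N_i))(h)$ and $\rho_X(\nu)=(\rho(\nu))_X$, the required equation
\[
	(\rho(\nu))_B\circ(\rho(N_1))(h)=(\rho(N_2))(h)\circ(\rho(\nu))_A
\]
is precisely the naturality square for the natural transformation $\rho(\nu)\colon\rho(N_1)\to\rho(N_2)$ evaluated at $h\colon A\to B$, which holds by hypothesis.

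For (iii), identities: $(\rho_{\id_A})_N=(\rho(N))(\id_A)=\id_{\rho_A(N)}$ because $\rho(N)$ is a functor, so $\rho_{\id_A}=\id_{\rho_A}$. Composition: for $h\colon A\to B$ and $g\colon B\to C$, $(\rho_{g\circ h})_N=(\rho(N))(g\circ h)=(\rho(N))(g)\circ(\rho(N))(h)=(\rho_g)_N\circ(\rho_h)_N$, and since vertical composition of natural transformations is componentwise, this equals $(\rho_g\circ\rho_h)_N$. Hence $\rho_{g\circ h}=\rho_g\circ\rho_h$. There is no real obstacle here — the only risk is clerical confusion between the two ``functor layers'' ($\rho$ versus each $\rho(N)$), so I would be explicit each time about which functoriality I am invoking.
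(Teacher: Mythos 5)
Your proposal is correct and follows essentially the same route as the paper's proof: verify that each \(\rho_A\) is a functor and each \(\rho_h\) is a natural transformation (the latter via the naturality square of \(\rho(\nu)\) at \(h\)), then check preservation of identities and composition componentwise. The only difference is cosmetic --- the paper also spells out the trivial checks that \(\rho_A\) sends objects to objects and morphisms to morphisms, which you subsume under ``unwinding the definitions.''
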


\begin{proof}
We show that \(\rho_{\_}\) takes objects to objects. Given \(A\in\ob(\Ca)\) we show that \(\rho_A\colon\Ia\to\Da\) is a functor. Given \(N\in\ob(\Ia)\) we have that \(\rho(N)\colon\Ca\to\Da\) is a functor and hence \(\rho_A(N)=(\rho(N))(A)\in\ob(\Da)\). Given \(\nu\in\mor(\Ia)\) we have that \(\rho(\nu)\) is a natural transformation and hence \(\rho_A(\nu)=(\rho(\nu))_A\in\mor(\Da)\). Thus, \(\rho_A\) takes objects to objects and morphisms to morphisms. Observe that \[\rho_A(\id_N)=(\rho(\id_N))_A=(\id_{\rho(N)})_A=\id_{(\rho(N))(A)}=\id_{\rho_A(N)}\] so \(\rho_A\) takes identities to identities. Given morphisms \(\nu_1\colon N_1\to N_2\) and \(\nu_2\colon N_2\to N_3\) in \(\Ia\) we have that \[\rho_A(\nu_2\circ\nu_1)=(\rho(\nu_2\circ\nu_1))_A=(\rho(\nu_2)\circ\rho(\nu_1))_A=(\rho(\nu_2))_A\circ(\rho(\nu_1))_A=\rho_A(\nu_2)\circ\rho_A(\nu_1)\] so \(\rho_A\) respects composition of morphisms. Thus, \(\rho_A\) is a functor and \(\rho_{\_}\) takes objects to objects.

We show that \(\rho_{\_}\) takes morphisms to morphisms. Given a morphism \(h\colon A\to B\) in \(\Ca\) we show that \(\rho_h\colon\rho_A\to\rho_B\) is a natural transformation. Given a morphism \(\nu\colon N_1\to N_2\) in \(\Ia\) we have that \(\rho(\nu)\colon\rho(N_1)\to\rho(N_2)\) is a natural transformation so
	\begin{align*}
		\rho_B(\nu)\circ(\rho_h)_{N_1} &= (\rho(\nu))_B\circ(\rho(N_1))(h) \\
		&= (\rho(N_2))(h)\circ(\rho(\nu))_A \\
		&= (\rho_h)_{N_2}\circ\rho_A(\nu)
	\end{align*}
and hence \(\rho_h\) is also a natural transformation, which is a morphism in \(\funcat(\Ia,\Da)\).

We show that \(\rho_{\_}\) takes identities to identities. Given an object \(A\in\ob(\Ca)\) and an object \(N\in\ob(\Ia)\) we have that \[(\rho_{\id_A})_N=(\rho(N))(\id_A)=\id_{(\rho(N))(A)}=\id_{\rho_A(N)}\] so \(\rho_{\id_A}\) is the identity natural transformation of \(\rho_A\).

We show that \(\rho_{\_}\) respects composition of morphisms. Given morphisms \(h_1\colon A_1\to A_2\) and \(h_2\colon A_2\to A_3\) in \(\Ca\) and \(N\in\ob(\Ia)\) we have that \[(\rho_{h_2\circ h_1})_N=(\rho(N))(h_2\circ h_1)=(\rho(N))(h_2)\circ(\rho(N))(h_1)=(\rho_{h_2})_N\circ(\rho_{h_1})_N\] so \(\rho_{h_2\circ h_1}=\rho_{h_2}\circ\rho_{h_1}\), as desired.
\end{proof}

Recall the categorical formulation of images.

\begin{defn}[Factorization]
Given a morphism \(h\colon A\to B\) in a category \(\Ca\) we refer to a triple \((V,\theta,\psi)\) where \(V\in\ob(\Ca)\), \(\theta\colon A\to V\), \(\psi\colon V\to B\), and \(h=\psi\circ\theta\) as a \emph{factorization} of \(h\).
\end{defn}

\begin{defn}[Image candidate]
Given a morphism \(h\colon A\to B\) in a category \(\Ca\) we refer to a factorization \((V,\theta,\psi)\) of \(h\) as an \emph{image candidate} for \(h\) when \(\psi\) is monic.
\end{defn}

\begin{defn}[Image triple]
Given a morphism \(h\colon A\to B\) in a category \(\Ca\) we say that an image candidate \((V_1,\theta_1,\psi_1)\) is an \emph{image triple} for \(h\) when given any image candidate \((V_2,\theta_2,\psi_2)\) for \(h\) there exists a unique morphism \(s\colon V_1\to V_2\) such that \(\psi_2\circ s=\psi_1\).
\end{defn}

\begin{defn}[Image of a morphism]
Given a morphism \(h\colon A\to B\) in a category \(\Ca\) for which an image triple \((V,\theta,\psi)\) exists the \emph{image} \(\im(h)\) of \(h\) is the subobject of \(B\) containing \(\psi\).
\end{defn}

The image of a morphism is well-defined when it exists by the universal property of image triples.

We are interested in those presignatures which support taking images in a certain sense.

\begin{defn}[Signature]
Given a presignature \(\rho\colon\Ia\to\funcat(\Ca,\Da)\) we say that \(\rho\) is a \emph{\((\Ca,\Da)\)-signature} on the index category \(\Ia\) when given any monomorphism \(F\colon U\hookrightarrow\rho_A\) in \(\funcat(\Ia,\Da)\) and any morphism \(h\colon A\to B\) in \(\Ca\) we have that \(\im(\rho_h\circ F)\) exists in \(\funcat(\Ia,\Da)\). When \(\Ca=\Da\) we refer to a \((\Ca,\Da)\)-signature on \(\Ia\) as a \emph{\(\Ca\)-signature} on \(\Ia\).
\end{defn}

\begin{defn}[Source of a signature]
Given a signature \(\rho\colon\Ia\to\funcat(\Ca,\Da)\) we refer to \(\Ca\) as the \emph{source} of \(\rho\) and say that \(\rho\) is a \emph{\(\Ca\)-sourced} signature.
\end{defn}

\begin{defn}[Target of a signature]
Given a signature \(\rho\colon\Ia\to\funcat(\Ca,\Da)\) we refer to \(\Da\) as the \emph{target} of \(\rho\) and say that \(\rho\) is a \emph{\(\Da\)-targeted} signature.
\end{defn}

We give some examples of signatures. We denote by \(\Ia_I\) the category whose objects form the set \(\set[N_i]{i\in I}\) and whose morphisms are all identities. Given \(n\in\N\) we define \(\Ia_n\coloneqq\Ia_{\set{1,\dots,n}}\). We write \(N\) rather than \(N_1\) for the single object of \(\Ia_1\).

We make use of the following characterization of \(\funcat(\Ia_I,\Da)\) for any category \(\Da\).

\begin{defn}[Sequence category]
Given a set \(I\) and a category \(\Da\) the \emph{sequence category} \(\Da^I\) of \(\Da\) indexed by \(I\) is defined as follows. The objects of \(\Da^I\) are the \(I\)-indexed sequences \(\set{A_i}_{i\in I}\) of objects of \(\Da\). A morphism from \(\set{A_i}_{i\in I}\) to \(\set{B_i}_{i\in I}\) is an \(I\)-indexed sequence \(\set{h_i\colon A_i\to B_i}_{i\in I}\) of morphisms of \(\Da\). The identity morphism of \(\set{A_i}_{i\in I}\) is \(\set{\id_{A_i}\colon A_i\to A_i}_{i\in I}\). Composition of morphisms is performed componentwise. That is, if \(h_1\colon A_1\to A_2\) and \(h_2\colon A_2\to A_3\) are morphisms in \(\Da^I\) then we define \(h_2\circ h_1\colon A_1\to A_3\) by \((h_2\circ h_1)_i\coloneqq(h_2)_i\circ(h_1)_i\).
\end{defn}

In other words, \(\Da^I\) is the \(I^{\text{th}}\) direct power of the category \(\Da\).

It is evident that \(\funcat(\Ia_I,\Da)\) is canonically isomorphic to \(\Da^I\). Given \(n\in\N\) we define \(\Da^n\coloneqq\Da^{\set{1,\dots,n}}\). There is also a canonical isomorphism between \(\Da^1\) and \(\Da\) itself. Throughout we suppress these isomorphisms and speak of objects and morphisms of \(\Da^I\) rather than the corresponding functors from \(\Ia_I\) to \(\Da\) and their natural transformations wherever they appear.

Observe that any construction in \(\Da^I\) is a sequence of constructions in \(\Da\). Monomorphisms in \(\Da^I\) are sequences of monomorphisms in \(\Da\), factorizations in \(\Da^I\) are sequences of factorizations in \(\Da\), subobjects in \(\Da^I\) are sequences of subobjects in \(\Da\), and so forth.

We have a convenient criterion for a presignature to be a signature on \(\Ia_I\).

\begin{prop}
Suppose that \(\rho\colon\Ia_I\to\funcat(\Ca,\Da)\) is a presignature such that \(\Da\) has all images. We have that \(\rho\) is a signature.
\end{prop}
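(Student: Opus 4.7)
The plan is to reduce the existence of images in $\funcat(\Ia_I,\Da)$ to the existence of images in $\Da$ componentwise, using the canonical isomorphism $\funcat(\Ia_I,\Da)\cong\Da^I$ already highlighted in the excerpt.

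First I would fix an arbitrary monomorphism $F\colon U\hookrightarrow\rho_A$ in $\funcat(\Ia_I,\Da)$ and a morphism $h\colon A\to B$ in $\Ca$, and translate everything in sight across the isomorphism $\funcat(\Ia_I,\Da)\cong\Da^I$. Under this translation, $F$ becomes an $I$-indexed family $\{F_i\colon U_i\hookrightarrow\rho_A(N_i)\}_{i\in I}$ of morphisms in $\Da$, and the natural transformation $\rho_h\colon\rho_A\to\rho_B$ becomes the family $\{(\rho_h)_{N_i}\colon\rho_A(N_i)\to\rho_B(N_i)\}_{i\in I}$. The excerpt explicitly notes that monomorphisms in $\Da^I$ are componentwise monomorphisms, so each $F_i$ is monic in $\Da$.

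Next I would invoke the hypothesis that $\Da$ has all images to obtain, for each $i\in I$, an image triple $(V_i,\theta_i,\psi_i)$ for the composite $(\rho_h)_{N_i}\circ F_i$ in $\Da$. Assembling these componentwise gives a triple $(V,\theta,\psi)\coloneqq(\{V_i\},\{\theta_i\},\{\psi_i\})$ in $\Da^I$, and I would check the two small routine facts: (i) $(V,\theta,\psi)$ is a factorization of $\rho_h\circ F$ since composition in $\Da^I$ is componentwise, and (ii) $\psi$ is monic in $\Da^I$ since each $\psi_i$ is monic in $\Da$. So $(V,\theta,\psi)$ is an image candidate for $\rho_h\circ F$.

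Finally I would verify the universal property. Given any other image candidate $(V',\theta',\psi')$ for $\rho_h\circ F$ in $\Da^I$, its components give image candidates $(V'_i,\theta'_i,\psi'_i)$ for $(\rho_h)_{N_i}\circ F_i$ in $\Da$; the universal property of each $(V_i,\theta_i,\psi_i)$ produces a unique $s_i\colon V_i\to V'_i$ with $\psi'_i\circ s_i=\psi_i$, and $s\coloneqq\{s_i\}_{i\in I}$ is then the unique mediating morphism in $\Da^I$. Thus $\im(\rho_h\circ F)$ exists in $\funcat(\Ia_I,\Da)$, which is exactly the signature condition. There is no genuine obstacle here; the entire content is that $\Ia_I$ is discrete, so the functor category splits as a product and images in the product are inherited coordinatewise. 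The only mildly delicate step is confirming that monomorphisms, factorizations, and image triples in $\Da^I$ are exactly componentwise monomorphisms, factorizations, and image triples in $\Da$, which the excerpt has already flagged as a general feature of sequence categories.
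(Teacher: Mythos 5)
Your proposal is correct and takes essentially the same approach as the paper, which also constructs the image of \(\rho_h\circ F\) componentwise from the images of the \((\rho_h\circ F)_i\) in \(\Da\) under the identification \(\funcat(\Ia_I,\Da)\cong\Da^I\). The paper simply leaves implicit the routine verifications (factorization, monicity, universal property) that you spell out.
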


\begin{proof}
Suppose that \(F\colon U\hookrightarrow\rho_A\) is a monomorphism in \(\funcat(\Ia_I,\Da)\) and that \(h\colon A\to B\) is a morphism in \(\Ca\). Since \(\Da\) has all images each of the components \((\rho_h\circ F)_i\) of \(\rho_h\circ F\) has an image in \(\Da\) and this sequence of images is the image of \(\rho_h\circ F\) in \(\funcat(\Ia_I,\Da)\).
\end{proof}

All of the following signatures have index category \(\Ia_I\) for some \(I\). We will see signatures with more involved index categories later. Note that \(\setcat\) and \(\setcat^\op\) have all images.

\begin{defn}[Identity signature]
Given a category \(\Ca\) which has all images the \emph{identity signature} on \(\Ca\) is the functor \(\rho\colon\Ia_1\to\funcat(\Ca,\Ca)\) where \(\rho(N)\coloneqq\id_\Ca\) where \(\id_\Ca\) is the identity functor of \(\Ca\).
\end{defn}

\begin{defn}[\(n\)-set functor]
Given \(n\in\N\) denote by \(\binom{\_}{\le n}\) the functor from \(\setcat\) to \(\setcat\) which takes a set \(A\) to the collection \(\binom{A}{\le n}\coloneqq\bigcup_{i=1}^n\binom{A}{n}\) of nonempty subsets of size at most \(n\) in \(A\) and takes a function \(h\colon A\to B\) to the induced map from \(\binom{A}{\le n}\) to \(\binom{B}{\le n}\). We refer to \(\binom{\_}{\le n}\) as the \emph{\(n\)-set functor}.
\end{defn}

\begin{defn}[\(n\)-hypergraph signature]
The \emph{\(n\)-hypergraph signature} is the functor \(\rho\colon\Ia_1\to\funcat(\setcat,\setcat)\) where \(\rho(N)\coloneqq\binom{\_}{\le n}\).
\end{defn}

\begin{defn}[\(n^{\text{th}}\) Cartesian power functor]
Given \(n\in\N\) denote by \(\_^n\) the functor from \(\setcat\) to \(\setcat\) which takes a set \(A\) to the collection of \(n\)-tuples \(A^n\) over \(A\) and takes a function \(h\colon A\to B\) to the induced map from \(A^n\) to \(B^n\). We refer to \(\_^n\) as the \emph{\(n^{\text{th}}\) Cartesian power functor}.
\end{defn}

\begin{defn}[Cartesian signature]
Given an index set \(I\) and a function \(\tilde{\rho}\colon I\to\N\) the \emph{Cartesian signature} of \(\tilde{\rho}\) is the functor \(\rho\colon\Ia_I\to\funcat(\setcat,\setcat)\) given by \(\rho(N_i)\coloneqq\_^{\tilde{\rho}(i)}\).
\end{defn}

\begin{defn}[Powerset functor]
Denote by \(\pow\) the functor from \(\setcat\) to \(\setcat\) which takes a set \(A\) to the collection of subsets \(\pow(A)\) of \(A\) and takes a function \(h\colon A\to B\) to the induced map from \(\pow(A)\) to \(\pow(B)\). We refer to \(\pow\) as the \emph{powerset functor}.
\end{defn}

\begin{defn}[Hypergraph signature]
The \emph{hypergraph signature} is the functor \(\rho\colon\Ia_1\to\funcat(\setcat,\setcat)\) given by \(\rho(N)\coloneqq\pow\).
\end{defn}

\begin{defn}[Contravariant powerset functor]
Denote by \(\pow^\op\) the functor from \(\setcat\) to \(\setcat^\op\) which takes a set \(A\) to the collection of subsets \(\pow(A)\) of \(A\) and takes a function \(h\colon A\to B\) to the induced map from \(\pow(B)\) to \(\pow(A)\). We refer to \(\pow^\op\) as the \emph{contravariant powerset functor}.
\end{defn}

\begin{defn}[Pseudospace signature]
The \emph{pseudospace signature} is the functor \(\rho\colon\Ia_1\to\funcat(\setcat,\setcat^\op)\) given by \(\rho(N)\coloneqq\pow^\op\).
\end{defn}

Our central objects of study are manufactured from signatures.

\begin{defn}[Structure]
Given a \((\Ca,\Da)\)-signature \(\rho\) on an index category \(\Ia\) and \(A\in\ob(\Ca)\) we refer to a subobject \(\mathbf{A}\) of \(\rho_A\) in the category \(\funcat(\Ia,\Da)\) as a \emph{\((\Ca,\Da)\)-structure} of signature \(\rho\) on \(A\) (or as a \emph{\(\rho\)-structure} when we want to emphasize the signature). When \(\Ca=\Da\) we refer to a \((\Ca,\Da)\)-structure as a \emph{\(\Ca\)-structure}.
\end{defn}

We will often indicate a structure by giving a member of the corresponding equivalence class of monomorphisms into \(\rho_A\). That is, we will introduce a structure \(\mathbf{A}\) of signature \(\rho\) by saying something like ``consider a \((\Ca,\Da)\)-structure \(\mathbf{A}\) of signature \(\rho\) containing \(F\)'' where \(F\) is a monomorphism in \(\funcl(\Ia,\Da)\) with codomain \(\rho_A\) and \(\mathbf{A}\) is understood to be the equivalence class of monomorphisms which is the corresponding subobject of \(\rho_A\). If we want to be even more succinct we will write \(\mathbf{A}\coloneqq(A,F)\) where \(F\) is a monomorphism with codomain \(\rho_A\).

\subsection{Parts of a structure}
We name the various basic parts of a structure.

\begin{defn}[Universe]
Given a structure \(\mathbf{A}\) on an object \(A\) we refer to \(A\) as the \emph{universe} of \(\mathbf{A}\).
\end{defn}

\begin{defn}[Relation, arity of a relation]
Given a structure \(\mathbf{A}\) on an object \(A\) of signature \(\rho\colon\Ia\to\funcat(\Ca,\Da)\) and \(N\in\ob(\Ia)\) we refer to the class of morphisms \(\mathbf{A}_N\coloneqq\set[F_N]{F\in\mathbf{A}}\) in \(\Da\) as the \emph{relation} of \(\mathbf{A}\) at \(N\). We say that \(\mathbf{A}_N\) has \emph{arity} \(\rho(N)\) or that \(\mathbf{A}_N\) is \emph{\(\rho(N)\)-ary}.
\end{defn}

There is a corresponding idea for morphisms of \(\Ia\). Given a category \(\Da\) we denote by \(\morc(\Da)\) the morphism category whose objects are the morphisms of \(\Da\) and whose morphisms are natural transformations between the corresponding diagrams in \(\Da\). Given a natural transformation \(\eta\colon X\to Y\) of functors from \(\Ia\) to \(\Da\) and a morphism \(\nu\colon N_1\to N_2\) in \(\Ia\) we obtain a morphism from \(X(\nu)\) to \(Y(\nu)\) in \(\morc(\Da)\), which we refer to as the \emph{component} \(\eta_\nu\) of \(\eta\) at \(\nu\) in analogy with the usual components of a natural transformation.

\begin{defn}[Relator, arity of a relator]
Given a structure \(\mathbf{A}\) on an object \(A\) of signature \(\rho\colon\Ia\to\funcat(\Ca,\Da)\) and \(\nu\in\mor(\Ia)\) we refer to the class of morphisms \(\mathbf{A}_\nu\coloneqq\set[F_\nu]{F\in\mathbf{A}}\) in \(\morc(\Da)\) as the \emph{relator} of \(\mathbf{A}\) at \(\nu\). We say that \(\mathbf{A}_\nu\) has \emph{arity} \(\rho(\nu)\) or that \(\mathbf{A}_\nu\) is \emph{\(\rho(\nu)\)-ary}.
\end{defn}

In many contexts it will happen that \(\mathbf{A}_N\) is actually a subobject of \(\rho_A(N)\). Traditionally relations on a set are defined without reference to a particular structure. One possible generalization of this is to take a relation on \(A\) of arity \(\rho(N)\) to be a subobject of \(\rho_A(N)\), but it is not clear that \(\mathbf{A}_N\) is always a subobject of \(\rho_A(N)\) for structures as we have defined them. Similar comments hold for an extrinsic definition of relators.

\begin{defn}[Source]
Given a \((\Ca,\Da)\)-structure \(\mathbf{A}\) we refer to \(\Ca\) as the \emph{source} of \(\mathbf{A}\) and say that \(\mathbf{A}\) is a \emph{\(\Ca\)-sourced} structure.
\end{defn}

\begin{defn}[Target]
Given a \((\Ca,\Da)\)-structure \(\mathbf{A}\) we refer to \(\Da\) as the \emph{target} of \(\mathbf{A}\) and say that \(\mathbf{A}\) is a \emph{\(\Da\)-targeted} structure.
\end{defn}

\subsection{Categories of structures}
We consider categories whose objects are structures with a common signature.

\begin{defn}[Similarity class]
Given a signature \(\rho\) we refer to the class of all structures of signature \(\rho\) as the \emph{\(\rho\) similarity class}, which we denote by \(\struct^\rho\).
\end{defn}

\begin{defn}[Similar structures]
We say that two structures \(\mathbf{A}\) and \(\mathbf{B}\) of the same signature \(\rho\) are \emph{similar structures} or that \(\mathbf{A}\) and \(\mathbf{B}\) are \emph{of the same similarity type}.
\end{defn}

A homomorphism from a structure \(\mathbf{A}\) to a structure \(\mathbf{B}\) of the same similarity type \(\rho\) should be a morphism \(h\) from the universe \(A\) of \(\mathbf{A}\) to the universe \(B\) of \(\mathbf{B}\) which ``respects the structure of \(\mathbf{A}\) and \(\mathbf{B}\)''. In order to formalize this we make use of the extractor of \(\rho\).

\begin{defn}[Image of a structure]
Given a \((\Ca,\Da)\)-signature \(\rho\), objects \(A,B\in\ob(\Ca)\), a morphism \(h\colon A\to B\), and a structure \(\mathbf{A}\) of signature \(\rho\) on \(A\) containing \(F\) we refer to \(h(\mathbf{A})\coloneqq\im(\rho_h\circ F)\) as the \emph{image} of \(\mathbf{A}\) under \(h\).
\end{defn}

Were we to use presignatures rather than signatures to define structures the image of \(\mathbf{A}\) under \(h\colon A\to B\) might not exist, in which case our lives would be much harder.

\begin{defn}[Morphism of structures]
Let \(\mathbf{A}\) be a structure on an object \(A\) of signature \(\rho\) and let \(\mathbf{B}\) be a structure on an object \(B\) of signature \(\rho\). We say that a morphism \(h\colon A\to B\) is a \emph{morphism} from \(\mathbf{A}\) to \(\mathbf{B}\) when \(h(\mathbf{A})\le\mathbf{B}\) as subobjects of \(\rho_B\). We write \(h\colon\mathbf{A}\to\mathbf{B}\) to indicate that \(h\) is a morphism from \(\mathbf{A}\) to \(\mathbf{B}\).
\end{defn}

\begin{defn}[Category of structures of signature \(\rho\)]
We denote by \(\structc^\rho\) the \emph{category of structures of signature \(\rho\)} (or the \emph{category of \(\rho\)-structures}) whose objects are the structures of similarity type \(\rho\colon\Ia\to\funcat(\Ca,\Da)\), whose morphisms are morphisms of structures, whose identity morphisms are those induced by the identity morphisms of \(\Ca\), and whose composition of morphisms is given by composition of underlying morphisms in \(\Ca\).
\end{defn}

In order to establish that \(\structc^\rho\) is indeed a category we need the following lemmas.

\begin{lem}[Composition is isotone]
\label{lem:composition_is_isotone}
Suppose that \(\Ca\) is a category with \(A_1\), \(A_2\), \(A_3\), and \(A_4\) objects of \(\Ca\), \(h_1\colon A_1\hookrightarrow A_3\), \(h_2\colon A_2\hookrightarrow A_3\), and \(h_3\colon A_3\to A_4\) such that \(h_1\le h_2\) in which \(\im(h_3\circ h_1)\) and \(\im(h_3\circ h_2)\) exist. We have that \(\im(h_3\circ h_1)\le\im(h_3\circ h_2)\).
\end{lem}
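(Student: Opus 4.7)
The plan is to read the inequality $h_1 \le h_2$ as a concrete factorization and then apply the universal property of the image triple of $h_3 \circ h_2$ directly. Since $h_1$ and $h_2$ are monics into $A_3$ with $h_1 \le h_2$ in the subobject poset, there is a morphism $k \colon A_1 \to A_2$ satisfying $h_2 \circ k = h_1$. Post-composing with $h_3$ gives
\[
	h_3 \circ h_1 = h_3 \circ h_2 \circ k.
\]

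Let $(V_2, \theta_2, \psi_2)$ denote an image triple for $h_3 \circ h_2$, so that $h_3 \circ h_2 = \psi_2 \circ \theta_2$ with $\psi_2$ monic. Substituting into the displayed equation, we get $h_3 \circ h_1 = \psi_2 \circ (\theta_2 \circ k)$. Because $\psi_2$ is still monic, the triple $(V_2, \theta_2 \circ k, \psi_2)$ is an image candidate for $h_3 \circ h_1$.

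Now let $(V_1, \theta_1, \psi_1)$ be the hypothesized image triple for $h_3 \circ h_1$. By the defining universal property of an image triple, applied to the image candidate $(V_2, \theta_2 \circ k, \psi_2)$, there exists a (unique) morphism $s \colon V_1 \to V_2$ with $\psi_2 \circ s = \psi_1$. This is exactly the statement that the subobject of $A_4$ represented by $\psi_1$ is contained in the one represented by $\psi_2$, which is precisely $\im(h_3 \circ h_1) \le \im(h_3 \circ h_2)$.

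There is no real obstacle here beyond bookkeeping. The only thing that deserves care is the direction of the universal property: the excerpt defines the image triple as the candidate which factors through every other candidate (so the comparison arrow goes \emph{out of} $V_1$), and this is exactly the direction needed to witness $\psi_1 \le \psi_2$. One should also remember to check that subobjects are represented by the monic $\psi_i$ rather than by the source objects $V_i$, so that $s$—which need not itself be monic in full generality—nevertheless suffices to exhibit the desired inequality of subobjects of $A_4$.
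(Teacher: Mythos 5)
Your proof is correct and follows essentially the same route as the paper's: extract the factoring morphism from \(h_1\le h_2\), use it to exhibit \((V_2,\theta_2\circ k,\psi_2)\) as an image candidate for \(h_3\circ h_1\), and invoke the universal property of the image triple of \(h_3\circ h_1\) to obtain the comparison \(\psi_1\le\psi_2\). Your morphism \(k\) is exactly the paper's \(h_4\), and your closing remarks about the direction of the universal property and about subobjects being represented by the monics \(\psi_i\) are accurate.
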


\begin{proof}
Let \((V_1,\theta_1,\psi_1)\) be an image triple for \(h_3\circ h_1\) and let \((V_2,\theta_2,\psi_2)\) be an image triple for \(h_3\circ h_2\). Since \(h_1\le h_2\) there exists a morphism \(h_4\colon A_1\to A_2\) such that \(h_1=h_2\circ h_4\). It follows that \[h_3\circ h_1=h_3\circ h_2\circ h_4=\psi_2\circ\theta_2\circ h_4\] so \((V_2,\theta_2\circ h_4,\psi_2)\) is an image candidate for \(h_3\circ h_1\). Since \((V_1,\theta_1,\psi_1)\) is an image triple for \(h_3\circ h_1\) there exists a morphism \(s\colon V_1\to V_2\) such that \(\psi_2\circ s=\psi_1\). This implies that \(\psi_1\le\psi_2\) and hence \(\im(h_3\circ h_1)\le\im(h_3\circ h_2)\).
\end{proof}

The situation described in the preceding proof is depicted in \autoref{fig:composition_is_isotone}.

\begin{figure}
    \begin{center}
        \begin{tikzcd}
            A_2 \ar[rrr, "\theta_2"] \ar[rd, hook, "h_2"] & & & V_2 \ar[ld, hook, swap, "\psi_2"] \\
            & A_3 \ar[r, "h_3"] & A_4 & \\
            A_1 \ar[uu, "h_4"] \ar[ru, hook, swap, "h_1"] \ar[rrr, "\theta_1"] & & & V_1 \ar[lu, hook, "\psi_1"] \ar[uu, swap, "s"]
        \end{tikzcd}
    \end{center}
    \caption{Composition is isotone}
    \label{fig:composition_is_isotone}
\end{figure}
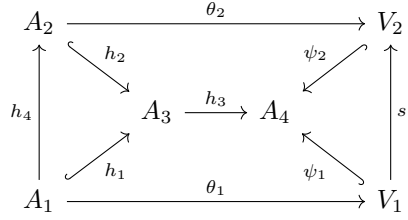

\begin{lem}[Morphism composition]
\label{lem:morphism_composition}
Suppose that \(\Ca\) is a category with \(A_i\in\ob(\Ca)\) for \(i\in\set{1,2,3,4,5,6}\), \(h_1\colon A_1\to A_2\), \(h_2\colon A_2\to A_3\), \(h_3\colon A_4\hookrightarrow A_1\), \(h_4\colon A_5\hookrightarrow A_2\), and \(h_5\colon A_6\hookrightarrow A_3\) such that \(\im(h_1\circ h_3)\), \(\im(h_2\circ h_4)\), and \(\im(h_2\circ h_1\circ h_3)\) exist. Suppose also that \(\psi_1\in\im(h_1\circ h_3)\), \(\im(h_2\circ\psi_1)\) exists, \(\im(h_1\circ h_3)\le\im(h_4)\), and \(\im(h_2\circ h_4)\le\im(h_5)\). We have that \(\im(h_2\circ h_1\circ h_3)\le\im(h_5)\).
\end{lem}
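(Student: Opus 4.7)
The plan is to chain three subobject comparisons whose composite gives the desired inequality, using the isotone lemma for the middle link and a direct image–candidate argument for the outer links. Concretely, I want to establish
\[
    \im(h_2\circ h_1\circ h_3)\le\im(h_2\circ\psi_1)\le\im(h_2\circ h_4)\le\im(h_5),
\]
and then conclude by transitivity of the subobject order on \(A_3\).

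First I would handle the leftmost inequality. Since \(\psi_1\in\im(h_1\circ h_3)\), \(\psi_1\) is a monomorphism into \(A_2\) and there is some \(\theta_1\) (the companion map from the image factorization of \(h_1\circ h_3\)) with \(h_1\circ h_3=\psi_1\circ\theta_1\). Let \((V,\theta,\psi)\) be an image triple for \(h_2\circ\psi_1\), which exists by hypothesis, so \(h_2\circ\psi_1=\psi\circ\theta\) with \(\psi\) monic. Then
\[
    h_2\circ h_1\circ h_3=\psi\circ\theta\circ\theta_1,
\]
and since \(\psi\) is monic this triple is an image candidate for \(h_2\circ h_1\circ h_3\). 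The universal property of the image triple of \(h_2\circ h_1\circ h_3\) (which also exists by hypothesis) then yields \(\im(h_2\circ h_1\circ h_3)\le\psi\), i.e.\ \(\im(h_2\circ h_1\circ h_3)\le\im(h_2\circ\psi_1)\).

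For the middle inequality, the hypothesis \(\im(h_1\circ h_3)\le\im(h_4)\) translates into the statement that \(\psi_1\le h_4\) as subobjects of \(A_2\) (any two representatives of comparable subobjects satisfy the representative-level inequality). I would then invoke \autoref{lem:composition_is_isotone} with \(h_1\rightsquigarrow\psi_1\), \(h_2\rightsquigarrow h_4\), and \(h_3\rightsquigarrow h_2\); its hypotheses are exactly that \(\im(h_2\circ\psi_1)\) and \(\im(h_2\circ h_4)\) exist, both of which we have, and it delivers \(\im(h_2\circ\psi_1)\le\im(h_2\circ h_4)\). The rightmost inequality \(\im(h_2\circ h_4)\le\im(h_5)\) is a direct hypothesis, and transitivity closes the chain.

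The only genuine subtlety, and the place I expect to need a moment of care, is the opening step: translating the equality of composites \(h_2\circ h_1\circ h_3=(h_2\circ\psi_1)\circ\theta_1\) into a subobject comparison. Everything else is bookkeeping on the subobject preorder plus a clean application of the isotone lemma already proved. The proof involves no limit calculations or diagram chasing beyond what the existence hypotheses already guarantee.
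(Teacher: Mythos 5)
Your proposal is correct and follows essentially the same route as the paper's proof: both establish the chain \(\im(h_2\circ h_1\circ h_3)\le\im(h_2\circ\psi_1)\le\im(h_2\circ h_4)\le\im(h_5)\), obtaining the middle link from \(\psi_1\le h_4\) via \autoref{lem:composition_is_isotone} and the left link by exhibiting \((V,\theta\circ\theta_1,\psi)\) as an image candidate for \(h_2\circ h_1\circ h_3=h_2\circ\psi_1\circ\theta_1\) and invoking the universal property of its image triple. The only cosmetic difference is that the paper phrases the left link as \(\im(h_2\circ\psi_1\circ\theta_1)\le\im(h_2\circ\psi_1)\) after rewriting the composite, which is the identical argument.
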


\begin{proof}
By the assumption that \(\im(h_1\circ h_3)\le\im(h_4)\) we have that \(\psi_1\le h_4\). Let \((V_1,\theta_1,\psi_1)\) be an image triple for \(h_1\circ h_3\), which must exist by our assumption that \(\psi_1\in\im(h_1\circ h_3)\). By \autoref{lem:composition_is_isotone} we have that \[\im(h_2\circ\psi_1)\le\im(h_2\circ h_4)\le\im(h_5).\] Since \(h_2\circ h_1\circ h_3=h_2\circ\psi_1\circ\theta_1\) it suffices to show that \(\im(h_2\circ\psi_1\circ\theta_1)\le\im(h_2\circ\psi_1)\).

Let \((V_2,\theta_2,\psi_2)\) be an image triple for \(h_2\circ\psi_1\circ\theta_1\) and let \((V_3,\theta_3,\psi_3)\) be an image triple for \(h_2\circ\psi_1\). Since \(h_2\circ\psi_1=\psi_3\circ\theta_3\) we have that \(h_2\circ\psi_1\circ\theta_1=\psi_3\circ\theta_3\circ\psi_1\) and hence \((V_3,\theta_3\circ\theta_1,\psi_3)\) is an image candidate for \(h_2\circ\psi_1\circ\theta_1\). By the universal property of the image triple of \(h_2\circ\psi_1\circ\theta_1\) we find that there exists a morphism \(s\colon V_2\to V_3\) such that \(\psi_3\circ s=\psi_2\). This implies that \(\psi_2\le\psi_3\) and hence \[\im(h_2\circ\psi_1\circ\theta_1)=\im(\psi_2)\le\im(\psi_3)=\im(h_2\circ\psi_1),\] as desired.
\end{proof}

The situation described in the preceding proof is depicted in \autoref{fig:morphism_composition}.

\begin{figure}
    \begin{center}
        \begin{tikzcd}
            & A_1 \ar[r, "h_1"] & A_2\ar[r, "h_2"] & A_3 \\
            A_4 \ar[ru, hook, "h_3"] \ar[r, "\theta_1"] & V_1 \ar[ru, hook, "\psi_1"] \ar[r] & A_5 \ar[u, hook, swap, "h_4"] & A_6 \ar[u, hook, "h_5"] \\
            & & & V_3 \ar[ld, hook, "\psi_3"] \\
            A_1 \ar[r, "\theta_1"] \ar[drrr, swap, "\theta_2"] & V_1 \ar[urr, "\theta_3"] \ar[r, swap, "h_2\circ\psi_1"] & A_3 & \\
            & & & V_2 \ar[lu, hook, swap, "\psi_2"] \ar[uu, "s"]
        \end{tikzcd}
    \end{center}
    \caption{Morphism composition}
    \label{fig:morphism_composition}
\end{figure}

We can now prove that structures form categories.

\begin{prop}
We have that \(\structc^\rho\) is a category for any signature \(\rho\).
\end{prop}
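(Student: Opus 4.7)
The plan is to verify the three defining features of a category: (i) that for each object $\mathbf{A}$ of $\structc^\rho$ the identity morphism of its universe is a morphism $\mathbf{A}\to\mathbf{A}$, (ii) that the composite of two morphisms of structures (taken as morphisms in $\Ca$) is again a morphism of structures, and (iii) the associativity and two-sided identity laws. Because composition and identities in $\structc^\rho$ are declared to be those of $\Ca$, item (iii) is inherited for free from $\Ca$; the content lies in (i) and (ii).

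For the identity, let $\mathbf{A}$ be a structure on $A$ containing $F$. By \autoref{prop:extractor_is_functor} the extractor is functorial, so $\rho_{\id_A}=\id_{\rho_A}$. Therefore $\id_A(\mathbf{A})=\im(\rho_{\id_A}\circ F)=\im(F)$, and since $F$ is already monic, $\im(F)$ is the subobject containing $F$, which is $\mathbf{A}$ itself. In particular $\id_A(\mathbf{A})\le\mathbf{A}$, so $\id_A\colon\mathbf{A}\to\mathbf{A}$ is a morphism of structures.

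For closure under composition, suppose $h_1\colon\mathbf{A}\to\mathbf{B}$ and $h_2\colon\mathbf{B}\to\mathbf{C}$ where $\mathbf{A},\mathbf{B},\mathbf{C}$ contain the monomorphisms $F,G,H$ into $\rho_A,\rho_B,\rho_C$ respectively. The hypotheses give $\im(\rho_{h_1}\circ F)\le\im(G)$ and $\im(\rho_{h_2}\circ G)\le\im(H)$, while functoriality of the extractor yields $\rho_{h_2\circ h_1}=\rho_{h_2}\circ\rho_{h_1}$, so what must be shown is $\im(\rho_{h_2}\circ\rho_{h_1}\circ F)\le\im(H)$. This is exactly the conclusion of \autoref{lem:morphism_composition}, applied with $\rho_{h_1},\rho_{h_2}$ playing the roles of its $h_1,h_2$ and $F,G,H$ playing the roles of its $h_3,h_4,h_5$. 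The various image-existence hypotheses of that lemma all hold because $\rho$ is a signature: each of $F$, $G$, and the monic part $\psi_1$ arising from the factorization of $\rho_{h_1}\circ F$ is a monomorphism into some $\rho_X$, so the relevant images exist in $\funcat(\Ia,\Da)$ by the very definition of signature.

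The main obstacle is purely notational: matching the data of \autoref{lem:morphism_composition} to the present setting and verifying that signaturehood of $\rho$ supplies precisely the existence hypotheses that lemma demands. Once that alignment is made the verification is routine bookkeeping, with associativity and the identity laws in $\structc^\rho$ descending directly from the corresponding laws in $\Ca$.
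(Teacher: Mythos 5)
Your proposal is correct and follows essentially the same route as the paper: closure under composition is reduced to \autoref{lem:morphism_composition} (with the extractor's functoriality supplying \(\rho_{h_2\circ h_1}=\rho_{h_2}\circ\rho_{h_1}\) and signaturehood supplying the image-existence hypotheses), while associativity and the identity laws are inherited from \(\Ca\). Your explicit check that \(\id_A(\mathbf{A})=\im(F)=\mathbf{A}\), so that identities are in fact morphisms of structures, is a small point the paper leaves implicit, but it does not change the argument.
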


\begin{proof}
We show that morphisms compose. Let \(\mathbf{A}_i\coloneqq(A_i,F_i)\) with \(F_i\colon U_i\hookrightarrow\rho_{A_i}\) for \(i\in\set{1,2,3}\). Suppose that \(h_1\colon\mathbf{A}_1\to\mathbf{A}_2\) and \(h_2\colon\mathbf{A}_2\to\mathbf{A}_3\) are morphisms. We must establish that \(h_2\circ h_1\) is a morphism from \(\mathbf{A}_1\) to \(\mathbf{A}_3\), so we need that \(\im(h_2\circ h_2\circ F_1)\le\mathbf{A}_3\). Unraveling definitions we find that this is precisely the situation in \autoref{lem:morphism_composition}, so we have that \(h_2\circ h_1\colon\mathbf{A}_1\to\mathbf{A}_3\).

That composition of morphisms in \(\structc^\rho\) is associative follows directly from the associativity of composition in \(\Ca\). Similarly, identity morphisms in \(\structc^\rho\) satisfy the requisite identity because identity morphisms in \(\Ca\) do so.
\end{proof}

\begin{defn}[Kinship class]
Given a signature \(\rho\colon\Ia\to\funcat(\Ca,\Da)\) and an object \(A\) of \(\Ca\) we refer to the class of all structures of signature \(\rho\) with universe \(A\) as the \emph{\((\rho,A)\) kinship class}, which we denote by \(\struct^\rho_A\).
\end{defn}

\begin{defn}[Kindred structures]
We say that two structures \(\mathbf{A}\) and \(\mathbf{B}\) of the same similarity type with the same universe are \emph{kindred structures} or that \(\mathbf{A}\) and \(\mathbf{B}\) are \emph{of the same kinship type}.
\end{defn}

\subsection{Example: Pairs}
Take \(\rho\colon\Ia_1\to\funcat(\Ca,\Ca)\) to be the identity signature on a category \(\Ca\). Recall that by definition of the identity signature \(\Ca\) must have all images.

\begin{defn}[Pair in \(\Ca\)]
Given a category \(\Ca\) a \emph{pair} in \(\Ca\) (or a \emph{\(\Ca\)-pair}) is an ordered pair \((A,\im(F))\) where \(\im(F)\) is a subobject of \(A\) in \(\Ca\).
\end{defn}

\begin{defn}[Pair class in \(\Ca\)]
We refer to the class of all pairs in \(\Ca\) as the \emph{pair class in \(\Ca\)} (or the \emph{\(\Ca\)-pair class}), which we denote by \(\paircl(\Ca)\).
\end{defn}

We will usually write \((A,F)\) rather than \((A,\im(F))\) and remember that \((A,F_1)\) and \((A,F_2)\) are the same pair in \(\Ca\) when \(\im(F_1)=\im(F_2)\). Note that \((A_1,F_1)\neq(A_2,F_2)\) as pairs when \(A_1\neq A_2\), even if the domains of \(F_1\) and \(F_2\) are isomorphic.

\begin{defn}[Morphism of pairs in \(\Ca\)]
Given a category \(\Ca\), pairs \(\mathbf{A}_1\coloneqq(A_1,F_1)\) and \(\mathbf{A}_2\coloneqq(A_2,F_2)\), and a morphism \(h\colon A_1\to A_2\) we say that \(h\) is a \emph{morphism} from \(\mathbf{A}_1\) to \(\mathbf{A}_2\) and write \(h\colon\mathbf{A}_1\to\mathbf{A}_2\) when \(\im(h\circ F_1)\le\im(F_2)\).
\end{defn}

This is to say that a morphism of pairs is a morphism of the ambient objects \(A_1\) and \(A_2\) which takes \(\im(F_1)\) to \(\im(F_2)\).

\begin{defn}[Category of pairs in \(\Ca\)]
Given a category \(\Ca\) with all images the \emph{category of pairs in \(\Ca\)} (or the \emph{category of \(\Ca\)-pairs}) is the category \(\paircat(\Ca)\) whose objects are \(\Ca\)-pairs, whose morphisms are morphisms of pairs in \(\Ca\), for which the identity of \((A,F)\) is \(\id_A\), and whose composition is given by composition of morphisms in \(\Ca\).
\end{defn}

We need that \(\Ca\) has all images to show that \(\paircat(\Ca)\) is a category. If \(\Ca\) doesn't have all images then morphisms of pairs may not be composable even if their underlying morphisms in \(\Ca\) are composable.

\begin{prop}
Given a category \(\Ca\) with all images we have that \(\paircat(\Ca)\) is a category.
\end{prop}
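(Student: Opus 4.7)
The plan is to check the three category axioms: composition of pair-morphisms is again a pair-morphism, composition is associative, and identities exist and satisfy the unit laws. The associativity and unit laws will descend immediately from the corresponding facts in $\Ca$, so the only substantive point is closure of the morphism class under composition, and the hypothesis that $\Ca$ has all images is exactly what guarantees that the ``image'' conditions used to define pair-morphisms make sense everywhere.

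The heart of the argument is closure under composition. Suppose $\mathbf{A}_i \coloneqq (A_i, F_i)$ are $\Ca$-pairs for $i \in \{1,2,3\}$ and that $h_1 \colon \mathbf{A}_1 \to \mathbf{A}_2$ and $h_2 \colon \mathbf{A}_2 \to \mathbf{A}_3$ are morphisms of pairs, so that $\im(h_1 \circ F_1) \le \im(F_2)$ and $\im(h_2 \circ F_2) \le \im(F_3)$. I must show $\im(h_2 \circ h_1 \circ F_1) \le \im(F_3)$. This is precisely the conclusion of \autoref{lem:morphism_composition} applied with $h_3 \coloneqq F_1$, $h_4 \coloneqq F_2$, and $h_5 \coloneqq F_3$; every image invoked in the hypotheses of that lemma exists because $\Ca$ has all images by assumption.

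For identities, given a pair $(A, F)$ I take $\id_A \colon A \to A$ and note that $\im(\id_A \circ F) = \im(F) \le \im(F)$, so $\id_A$ is indeed a pair-morphism from $(A,F)$ to itself. The unit laws and the associativity of composition for pair-morphisms then follow directly from the corresponding laws in $\Ca$, since composition of pair-morphisms is defined to be composition of the underlying morphisms.

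The main obstacle is the closure step, but it has been arranged to fall out of \autoref{lem:morphism_composition}, so there is essentially nothing new to prove. As a remark I would observe that a $\Ca$-pair is nothing other than a structure for the identity signature on $\Ca$, so this proposition is in fact a special case of the earlier result that $\structc^\rho$ is a category; one could alternatively deduce it that way rather than reapplying \autoref{lem:morphism_composition} by hand.
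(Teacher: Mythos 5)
Your proposal is correct and follows the same route as the paper: the paper likewise reduces closure under composition to \autoref{lem:morphism_composition} (your substitution $h_3\coloneqq F_1$, $h_4\coloneqq F_2$, $h_5\coloneqq F_3$ is exactly the intended instantiation, with all images existing by hypothesis) and then observes that associativity and the unit laws descend from $\Ca$. Your closing remark that the result is a special case of $\structc^\rho$ being a category, via the identity signature and the isomorphism $\structc^\rho\cong\paircat(\Ca)$, is also made by the paper immediately after its proof.
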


\begin{proof}
We show that morphisms compose. Suppose that \(\mathbf{A}_i\coloneqq(A_i,F_i)\in\paircl(\Ca)\) for each \(i\in\set{1,2,3}\) and that \(h_1\colon\mathbf{A}_1\to\mathbf{A}_2\) and \(h_2\colon\mathbf{A}_2\to\mathbf{A}_3\) are morphisms of pairs. We show that \(h_2\circ h_1\colon A_1\to A_3\) is a morphism from \(\mathbf{A}_1\) to \(\mathbf{A}_3\). Since \(\Ca\) has all images this is precisely the situation in \autoref{lem:morphism_composition} so morphisms in \(\paircat(\Ca)\) compose.

Again the associativity of composition and the identity property for \(\paircat(\Ca)\) follow immediately from those for \(\Ca\).
\end{proof}

It is not surprising that the proof that \(\paircat(\Ca)\) is essentially identical to the proof that \(\structc^\rho\) is a category since by our characterization of the category \(\funcat(\Ia_1,\Ca)\) we find that \(\structc^\rho\cong\paircat(\Ca)\).

From this isomorphism we see that given a structure \(\mathbf{A}\coloneqq(A,F)\in\struct^\rho\) we have that the relation \(\mathbf{A}_N\) is the subobject \(\im(F)\) of \(A\) in \(\Ca\) and that \(\mathbf{A}\) has no nontrivial basic relators.

\subsection{Example: Hypergraphs}
We examine the category of structures obtained from the \(n\)-hypergraph signature \(\rho\colon\Ia_1\to\funcat(\setcat,\setcat)\).

\begin{defn}[\(n\)-hypergraph]
Given a set \(A\) we refer to \(\mathbf{A}\coloneqq(A,F)\) where \(F\subset\binom{A}{\le n}\) as an \emph{\(n\)-hypergraph} on \(A\).
\end{defn}

We denote by \(\hypercl_n\) the class of \(n\)-hypergraphs.

\begin{defn}[Morphism of \(n\)-hypergraphs]
Given \(n\)-hypergraphs \(\mathbf{A}_1\coloneqq(A_1,F_1)\) and \(\mathbf{A}_2\coloneqq(A_2,F_2)\) we refer to a function \(h\colon A_1\to A_2\) as a \emph{morphism} from \(\mathbf{A}_1\) to \(\mathbf{A}_2\) and write \(h\colon\mathbf{A}_1\to\mathbf{A}_2\) when \(h(F_1)\subset F_2\) where \[h(F_1)\coloneqq\set[\set[h(a)]{a\in E}]{E\in F_1}.\]
\end{defn}

\begin{defn}[Category of \(n\)-hypergraphs]
We denote by \(\hypercat_n\) the \emph{category of \(n\)-hypergraphs} whose objects form the class \(\hypercl_n\), whose morphisms are morphisms of \(n\)-hypergraphs, for which the identity of \((A,F)\) is \(\id_A\), and whose composition is given by composition of functions.
\end{defn}

It is evident that \(\structc^\rho\cong\hypercat_n\).

\subsection{Further examples}
When \(\rho\) is the hypergraph signature given by \(\rho(N)\coloneqq\pow{}\), we have that \(\structc^\rho\) is the category of hypergraphs, which has the category of simplicial sets as a subcategory. When \(\rho\) is instead the pseudospace signature given by \(\rho(N)\coloneqq\pow{}^\op\), we have that \(\structc^\rho\) has the category of topological spaces as a subcategory. We leave it to the reader to unravel what a pseudospace is supposed to be (concretely) from this abstract definition.

\section{The Yoneda embedding}
\label{sec:Yoneda}
Although our definition of structure appears to be more general than the structures usually considered in model theory, whose basic relations are subsets of Cartesian powers of the universe, we show that each category of \(\setcat\)-sourced structures embeds into a category of \(\setcat\)-structures whose basic relations are all subsets of Cartesian powers of the universe, at the expense that our new index category may be large where our original index category was small.

The driving device here is the Yoneda embedding. Given a locally small category \(\Ca\) let \(\yo_{\_}\colon\Ca\to\funcat(\Ca^\op,\setcat)\) denote the contravariant \(\hom\)-functor. Recall the following embedding of categories due to Yoneda.

\begin{lem}[Yoneda Lemma]
Let \(\Ca\) be a locally small category. The functor \(\yo_{\_}\) is full and faithful.
\end{lem}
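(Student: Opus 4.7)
The plan is to prove the stronger representable form of the Yoneda Lemma, from which the stated full and faithful property follows by specialization. Specifically, for any functor $F\colon\Ca^\op\to\setcat$ and any $A\in\ob(\Ca)$, I will show that the evaluation map
\[
    \Phi\colon \{\eta\colon \yo_A\to F\}\to F(A),\quad \eta\mapsto\eta_A(\id_A)
\]
is a bijection. Taking $F\coloneqq\yo_B$ then yields a bijection from natural transformations $\yo_A\to\yo_B$ to $\hom_\Ca(A,B)$, and a direct unwinding will identify this bijection with the action of $\yo_{\_}$ on morphisms, proving fullness and faithfulness simultaneously.

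To prove $\Phi$ is a bijection, I would construct an explicit inverse $\Psi\colon F(A)\to\{\eta\colon\yo_A\to F\}$ by assigning to each $x\in F(A)$ the natural transformation whose component at $X\in\ob(\Ca)$ is the function $(\Psi(x))_X\colon\hom_\Ca(X,A)\to F(X)$ sending $f\mapsto F(f)(x)$. The first thing to check is that $\Psi(x)$ is genuinely a natural transformation: for any $g\colon X\to Y$ in $\Ca$ and any $f\colon Y\to A$, functoriality of $F$ gives $F(g)(F(f)(x))=F(f\circ g)(x)$, which is exactly the commutativity required of the naturality square. Verifying $\Phi\circ\Psi=\id$ is immediate from $F(\id_A)=\id_{F(A)}$. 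The heart of the argument is $\Psi\circ\Phi=\id$: given $\eta\colon\yo_A\to F$ and $x\coloneqq\eta_A(\id_A)$, the naturality square for $\eta$ applied to $f\colon X\to A$ forces $F(f)(\eta_A(\id_A))=\eta_X(f\circ\id_A)=\eta_X(f)$, so $\Psi(\Phi(\eta))=\eta$.

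Finally, specializing $F\coloneqq\yo_B$, an element of $F(A)=\hom_\Ca(A,B)$ is a morphism $f\colon A\to B$, and under $\Psi$ it corresponds to the natural transformation whose component at $X$ sends $g\colon X\to A$ to $f\circ g$; this is precisely the morphism $\yo_{\_}(f)\colon\yo_A\to\yo_B$ assigned by the Yoneda functor. Thus the action $f\mapsto\yo_{\_}(f)$ itself realizes the bijection $\Psi$, so $\yo_{\_}$ is full and faithful. The main obstacle here is notational rather than conceptual: one must carefully track where each component of a natural transformation lives when unpacking the naturality squares, but once the relevant diagrams are drawn each verification collapses to a short calculation using functoriality of $F$.
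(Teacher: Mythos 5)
Your proof is correct: it is the standard argument for the Yoneda Lemma, establishing the bijection \(\{\eta\colon\yo_A\to F\}\cong F(A)\) via \(\eta\mapsto\eta_A(\id_A)\) and its explicit inverse, then specializing \(F\coloneqq\yo_B\) to obtain fullness and faithfulness, and all the naturality and inverse verifications you sketch go through exactly as stated for the contravariant hom-functor. The paper itself offers no proof of this lemma (it merely recalls the result as known), so there is nothing to compare against; your argument is the canonical one and is sound.
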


We actually need more general source categories than \(\setcat\).

\begin{defn}[Exponential category]
We say that a full subcategory \(\Ca\) of \(\setcat\) is \emph{exponential} when \(\Ca\) is closed under taking subsets and forming exponential objects.
\end{defn}

One example of an exponential category is the full subcategory of \(\setcat\) whose objects are the empty set and every singleton set. The largest possible example of an exponential category is \(\setcat\) itself. We will be most interested in the exponential category \(\finsetcat\) whose objects form the class \(\finsetcl\) of finite sets.

Given an exponential category \(\Ca\) and a signature \(\rho\colon\Ia\to\funcat(\Ca,\Da)\) we apply the Yoneda Lemma to \(\structc^\rho\) to obtain an embedding \[\yo_{\_}\colon\structc^\rho\hookrightarrow\funcat((\structc^\rho)^\op,\setcat).\] Each structure \(\mathbf{A}\in\struct^\rho_A\) determines a functor \[\yo_{\mathbf{A}}\colon(\structc^\rho)^\op\to\setcat.\] If \(\mathbf{A}\in\struct^\rho_A\) and \(\mathbf{C}\in\struct^\rho_C\) with \(A,C\in\ob(\Ca)\) then \[\yo_{\mathbf{A}}(\mathbf{C})=\hom(\mathbf{C},\mathbf{A})\subset A^C\] so \(\yo_{\mathbf{A}}\) can be restricted on its codomain to a functor from \((\structc^\rho)^\op\) to \(\Ca\).

\begin{defn}[Exponential Yoneda functor]
Given an exponential category \(\Ca\) and a signature \(\rho\colon\Ia\to\funcat(\Ca,\Da)\) the \emph{exponential Yoneda functor} \[\yo^\Ca_{\_}\colon\structc^\rho\to\funcat((\structc^\rho)^\op,\Ca)\] of \(\rho\) over \(\Ca\) is the functor obtained by restricting the codomain of \(\yo_{\mathbf{A}}\) to \(\Ca\) for each \(\mathbf{A}\in\struct^\rho\).
\end{defn}

Restricting the codomain of a functor preserves embeddings so the exponential Yoneda functor is also an embedding, which we are justified in calling the \emph{exponential Yoneda embedding}.

\begin{defn}[Yoneda signature]
Given an exponential category \(\Ca\) and a signature \(\rho\colon\Ia\to\funcat(\Ca,\Da)\) the \emph{Yoneda signature} \(\rho^\Ca\) of \(\rho\) over \(\Ca\) is the functor \(\rho^\Ca\colon(\structc^\rho)^\op\to\funcat(\Ca,\Ca)\) defined as follows. For \(\mathbf{C}\in\struct^\rho_C\) the functor \(\rho^\Ca(\mathbf{C})\colon\Ca\to\Ca\) is given by \(\rho^\Ca(\mathbf{C})(A)\coloneqq A^C\) for each set \(A\) and \[\rho^\Ca(\mathbf{C})(h)\coloneqq h\circ\_\colon A_1^C\to A_2^C\] for each function \(h\colon A_1\to A_2\). Given a morphism \(f\colon\mathbf{C}_2\to\mathbf{C}_1\) in \(\structc^\rho\) the natural transformation \(\rho^\Ca(f)\colon\rho^\Ca(\mathbf{C}_1)\to\rho^\Ca(\mathbf{C}_2)\) is given by \[\rho^\Ca(f)_A\coloneqq\_\circ f\colon\rho^\Ca(\mathbf{C}_1)(A)\to\rho^\Ca(\mathbf{C}_2)(A).\]
\end{defn}

In order to show that \(\rho^{\Ca}\) is a signature we need the following lemma about exponential categories.

\begin{lem}
\label{lem:exp_cat_has_factorization}
Given an exponential category \(\Ca\) and a function \(h\colon A\to B\) in \(\Ca\) we have that \(h\) has an image triple \((V,\theta,\psi)\) in \(\Ca\) where \(\theta\) is surjective.
\end{lem}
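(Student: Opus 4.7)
The natural candidate is the set-theoretic image. The plan is to set $V \coloneqq \{h(a) \mid a \in A\} \subset B$, let $\psi \colon V \hookrightarrow B$ be the inclusion, and let $\theta \colon A \to V$ be the corestriction $a \mapsto h(a)$, which is surjective by construction. Since $B \in \ob(\Ca)$ and $\Ca$ is closed under taking subsets, we have $V \in \ob(\Ca)$, and because $\Ca$ is a \emph{full} subcategory of $\setcat$, both $\theta$ and $\psi$ are morphisms in $\Ca$. The equation $h = \psi \circ \theta$ is immediate from the definitions.

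Next I would verify that $(V, \theta, \psi)$ is an image candidate. This reduces to checking that $\psi$ is monic in $\Ca$. Since $\Ca$ is a full subcategory of $\setcat$ and $\psi$ is injective as a function, any pair of morphisms $g_1, g_2$ in $\Ca$ with $\psi \circ g_1 = \psi \circ g_2$ must agree pointwise, so $\psi$ is monic.

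To establish the universal property, suppose $(V_2, \theta_2, \psi_2)$ is another image candidate for $h$, so $\psi_2$ is monic (hence injective as a function between sets) and $h = \psi_2 \circ \theta_2$. For each $v \in V$, pick $a \in A$ with $v = h(a)$ and define $s(v) \coloneqq \theta_2(a)$. Well-definedness is the heart of the argument: if $h(a_1) = h(a_2)$, then $\psi_2(\theta_2(a_1)) = \psi_2(\theta_2(a_2))$, and injectivity of $\psi_2$ forces $\theta_2(a_1) = \theta_2(a_2)$. Then $\psi_2 \circ s = \psi$ by direct calculation, fullness of $\Ca$ gives $s \in \mor(\Ca)$, and uniqueness of $s$ follows again from the injectivity of $\psi_2$: any $s'$ with $\psi_2 \circ s' = \psi$ must agree with $s$ pointwise.

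There is no serious obstacle here: the argument is essentially the standard construction of images in $\setcat$, and the two hypotheses on an exponential category play very limited but essential roles --- closure under subsets ensures $V \in \ob(\Ca)$, while fullness ensures that $\theta$, $\psi$, and the mediating $s$ are honest morphisms of $\Ca$. The closure under exponentials is not needed for this lemma.
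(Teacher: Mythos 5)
Your construction is exactly the one the paper uses: take the set-theoretic image $V$, the surjective corestriction $\theta$, and the inclusion $\psi$, with closure under subsets supplying $V\in\ob(\Ca)$ and fullness supplying the morphisms. The paper simply asserts that the $\setcat$-image triple ``remains'' an image triple in $\Ca$, whereas you verify the universal property explicitly (well-definedness and uniqueness of the mediating map via injectivity of $\psi_2$), so your write-up is the same argument carried out in more detail.
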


\begin{proof}
Since \(\Ca\) is closed under taking subobjects we can form the subset \(V\subset B\) given by \(V\coloneqq\set[b\in B]{(\exists a\in A)(h(a)=b)}\). Define \(\theta\colon A\to V\) by \(\theta(a)\coloneqq h(a)\) for each \(a\in A\) and let \(\psi\colon V\hookrightarrow B\) be the inclusion of \(V\) as a subset of \(B\). Since \(\Ca\) is a full subcategory of \(\setcat\) we have that \((V,\theta,\psi)\) remains an image triple for \(h\) in \(\Ca\). Observe that \(\theta\) is surjective.
\end{proof}

\begin{prop}
The Yoneda signature \(\rho^\Ca\) is a signature.
\end{prop}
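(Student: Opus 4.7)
The plan is to verify the signature condition by constructing image triples componentwise in $\Ca$ using \autoref{lem:exp_cat_has_factorization} and then assembling them into a single natural transformation. First I would fix a monomorphism $F\colon U\hookrightarrow\rho^\Ca_A$ in $\funcat((\structc^\rho)^\op,\Ca)$ and a morphism $h\colon A\to B$ in $\Ca$. Since $\Ca$ is a full subcategory of $\setcat$, monomorphisms in this functor category are componentwise injective. For each $\mathbf{C}\in\struct^\rho_C$ I would apply \autoref{lem:exp_cat_has_factorization} to $(\rho^\Ca_h\circ F)_{\mathbf{C}}\colon U(\mathbf{C})\to B^C$ to obtain an image triple $(V(\mathbf{C}),\theta_{\mathbf{C}},\psi_{\mathbf{C}})$ in $\Ca$ in which $\psi_{\mathbf{C}}$ is the inclusion of $V(\mathbf{C})\subset B^C$ and, crucially, $\theta_{\mathbf{C}}$ is surjective.

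Next I would assemble these data into a functor $V\colon(\structc^\rho)^\op\to\Ca$. For a morphism $g\colon\mathbf{C}_1\to\mathbf{C}_2$ in $(\structc^\rho)^\op$, I would show that $\rho^\Ca_B(g)\colon B^{C_1}\to B^{C_2}$ restricts to a map $V(\mathbf{C}_1)\to V(\mathbf{C}_2)$ by picking $b\in V(\mathbf{C}_1)$, pulling it back through the surjection $\theta_{\mathbf{C}_1}$ to some $u\in U(\mathbf{C}_1)$, and then using naturality of $\rho^\Ca_h\circ F$ to rewrite $\rho^\Ca_B(g)(b)=(\rho^\Ca_h\circ F)_{\mathbf{C}_2}(U(g)(u))$, which lies in $V(\mathbf{C}_2)$ by definition. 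Functoriality of $V$ inherits from functoriality of $\rho^\Ca_B$. A parallel elementwise check shows that $\theta$ and $\psi$ are natural, so $(V,\theta,\psi)$ is an image candidate for $\rho^\Ca_h\circ F$ in $\funcat((\structc^\rho)^\op,\Ca)$, with $\psi$ componentwise monic.

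For the universal property, given any other image candidate $(V',\theta',\psi')$, componentwise application of the universal property of $(V(\mathbf{C}),\theta_{\mathbf{C}},\psi_{\mathbf{C}})$ produces unique morphisms $s_{\mathbf{C}}\colon V(\mathbf{C})\to V'(\mathbf{C})$ with $\psi'_{\mathbf{C}}\circ s_{\mathbf{C}}=\psi_{\mathbf{C}}$. Since $\psi'_{\mathbf{C}}\circ s_{\mathbf{C}}\circ\theta_{\mathbf{C}}=\psi_{\mathbf{C}}\circ\theta_{\mathbf{C}}=(\rho^\Ca_h\circ F)_{\mathbf{C}}=\psi'_{\mathbf{C}}\circ\theta'_{\mathbf{C}}$ and $\psi'_{\mathbf{C}}$ is mono, we get $s_{\mathbf{C}}\circ\theta_{\mathbf{C}}=\theta'_{\mathbf{C}}$. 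To prove naturality of $s$, I would precompose the desired equation $s_{\mathbf{C}_2}\circ V(g)=V'(g)\circ s_{\mathbf{C}_1}$ with $\theta_{\mathbf{C}_1}$, chase through the naturality squares of $\theta$, $\theta'$, $V$, and $V'$, and conclude by appealing once more to surjectivity of $\theta_{\mathbf{C}_1}$ to cancel it from the right. Uniqueness of the natural transformation $s$ is immediate from the componentwise monicity of $\psi'$.

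The main obstacle is precisely the invocation of surjectivity of $\theta_{\mathbf{C}}$, which enters both in showing $\rho^\Ca_B(g)$ restricts coherently to $V$ and in transferring componentwise naturality of $s$ to global naturality. This is why the strengthened \autoref{lem:exp_cat_has_factorization} (not merely the existence of image triples, but surjectivity of the coimage leg) is essential; the exponential hypothesis on $\Ca$ is used nowhere else in this argument.
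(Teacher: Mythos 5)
Your treatment of the image condition follows the paper's proof almost exactly: you take pointwise image triples with surjective coimage legs via \autoref{lem:exp_cat_has_factorization}, use that surjectivity together with naturality of \(\rho^\Ca_h\circ F\) to show \(\rho^\Ca_B(g)\) restricts to the pointwise images so that the \(V(\mathbf{C})\) assemble into a functor, and observe that the componentwise data give a monic factorization in \(\funcat((\structc^\rho)^\op,\Ca)\). You are in fact more thorough than the paper on one point: you verify the universal property of the resulting image candidate, which the paper leaves implicit. That verification, however, rests on your opening assertion that monomorphisms in \(\funcat((\structc^\rho)^\op,\Ca)\) are componentwise injective; you need this to know that an arbitrary competing candidate \((V',\theta',\psi')\) is componentwise an image candidate in \(\Ca\), since otherwise the componentwise universal property does not apply. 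The assertion is true, but it is not automatic for a mere full subcategory of \(\setcat\): the natural argument tests against representables and uses that \(\hom(\mathbf{D},\mathbf{C})\subset C^D\) is an object of \(\Ca\) precisely because \(\Ca\) is exponential. As written, this is an unproven claim doing real work in your argument.

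The more substantive omission is that you never verify that \(\rho^\Ca\) is a presignature at all, i.e., that \(\rho^\Ca\colon(\structc^\rho)^\op\to\funcat(\Ca,\Ca)\) is a well-defined functor: that each \(\rho^\Ca(\mathbf{C})=\_^C\) is an endofunctor of \(\Ca\), that each \(\rho^\Ca(f)=\_\circ f\) is a natural transformation, and that identities and composition are preserved. The definition of a signature presupposes a presignature, and the paper spends the first half of its proof on these checks. This is also where the exponential hypothesis on \(\Ca\) is genuinely needed a second time --- one must know \(A^C\in\ob(\Ca)\) for \(\rho^\Ca(\mathbf{C})\) to send objects of \(\Ca\) to objects of \(\Ca\) --- so your closing remark that the exponential hypothesis ``is used nowhere else in this argument'' is a symptom of the gap rather than an observation about the proof. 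The missing verifications are routine, but without them the image condition is being checked for a functor that has not been shown to exist.
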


\begin{proof}
We show that \(\rho^\Ca\) is a functor. For \(\mathbf{C}\in\struct^\rho_C\) we show that \(\rho^\Ca(\mathbf{C})\colon\Ca\to\Ca\) is a functor and hence an object of \(\funcat(\Ca,\Ca)\).

Given \(A\in\ob(\Ca)\) we have that \(\rho^\Ca(\mathbf{C})(A)=A^C\), which is an object of \(\Ca\) since \(A,C\in\ob(\Ca)\) and \(\Ca\) is an exponential category. Thus, \(\rho^\Ca(\mathbf{C})\) takes objects to objects. Given a function \(h\colon A_1\to A_2\) in \(\Ca\) we have that \(\rho^\Ca(\mathbf{C})(h)\colon A_1^C\to A_2^C\) is a function. Thus, \(\rho^\Ca(\mathbf{C})\) takes morphisms to morphisms. Given an identity map \(\id_A\colon A\to A\) in \(\Ca\) we have that \[\rho^\Ca(\mathbf{C})(\id_A)=\id_A\circ\_=\id_{A^C}\colon A^C\to A^C.\] Thus, \(\rho^\Ca(\mathbf{C})\) takes identities to identities. Given functions \(h_1\colon A_1\to A_2\) and \(h_2\colon A_2\to A_3\) in \(\Ca\) we have that
	\begin{align*}
		\rho^\Ca(\mathbf{C})(h_2\circ h_1) &= (h_2\circ h_1)\circ\_ \\
		&= h_2\circ(h_1\circ\_) \\
		&= (h_2\circ\_)\circ(h_1\circ\_) \\
		&= \rho^\Ca(\mathbf{C})(h_2)\circ\rho^\Ca(\mathbf{C})(h_1)
	\end{align*}
so \(\rho^\Ca(\mathbf{C})\) respects composition. We find that \(\rho^\Ca(\mathbf{C})\) is a functor and hence \(\rho^\Ca\) takes objects to objects.

For \(f\colon\mathbf{C}_2\to\mathbf{C}_1\) in \(\structc^\rho\) we show that \(\rho^\Ca(f)\colon\rho^\Ca(\mathbf{C}_1)\to\rho^\Ca(\mathbf{C}_2)\) is a natural transformation and hence a morphism of \(\funcat(\Ca,\Ca)\). Given a function \(h\colon A_1\to A_2\) in \(\Ca\) it is immediate that \[\rho^\Ca(f)_{A_2}\circ\rho^\Ca(\mathbf{C}_1)(h)=h\circ\_\circ f=\rho^\Ca(\mathbf{C}_2)(h)\circ\rho^\Ca(f)_{A_1}\] so \(\rho^\Ca(f)\) is a natural transformation and hence \(\rho^\Ca\) takes morphisms to morphisms.

Given an identity morphism \(\id_{\mathbf{C}}\colon\mathbf{C}\to\mathbf{C}\) and any \(A\in\ob(\Ca)\) we have that \(\rho^\Ca(\id_{\mathbf{C}})_A=\_\circ\id_C=\id_{A^C}\) so \(\rho^\Ca(\id_{\mathbf{C}})=\id_{\rho^\Ca(\mathbf{C})}\) is the identity natural transformation of \(\rho^\Ca(\mathbf{C})\) and \(\rho^\Ca\) takes identities to identities.

Given morphisms \(f_2\colon\mathbf{C}_3\to\mathbf{C}_2\) and \(f_1\colon\mathbf{C}_2\to\mathbf{C}_1\) in \(\structc^\rho\) and \(A\in\ob(\Ca)\) we have that \[\rho^\Ca(f_2)_A\circ\rho^\Ca(f_1)_A=\_\circ f_1\circ f_2=\rho^\Ca(f_1\circ f_2)_A\] so \(\rho^\Ca\) respects composition and is thus a functor from \((\structc^\rho)^\op\) to \(\funcat(\Ca,\Ca)\).

It remains to show that given a monomorphism \(F\colon U\hookrightarrow\rho^\Ca_{A_1}\) in
    \[
        \funcat((\structc^\rho)^\op,\Ca)
    \]
and any function \(h\colon A_1\to A_2\) in \(\Ca\) we have that \(\im(\rho^\Ca_h\circ F)\) exists in
    \[
        \funcat((\structc^\rho)^\op,\Ca).
    \]

For each \(\mathbf{C}\in\ob(\Ca)\) let \((V_{\mathbf{C}},\theta_{\mathbf{C}},\psi_{\mathbf{C}})\) be an image triple for \((\rho^\Ca_h\circ F)_{\mathbf{C}}\) where \(\theta_{\mathbf{C}}\) is surjective as guaranteed by \autoref{lem:exp_cat_has_factorization}. For each morphism \(f\colon\mathbf{C}_2\to\mathbf{C}_1\) we have that
	\begin{align*}
		\im(\rho^\Ca_{A_2}(f)\circ\psi_{\mathbf{C}_1}) &= \im(\rho^\Ca_{A_2}(f)\circ\psi_{\mathbf{C}_1}\circ\theta_{\mathbf{C}_1}) \\
		&= \im(\psi_{\mathbf{C}_2}\circ\theta_{\mathbf{C}_2}\circ U(f)) \\
		&\le \im(\psi_{\mathbf{C}_2})
	\end{align*}
so the codomain restriction \[g_f\coloneqq(\rho^\Ca_{A_2}(f)\circ\psi_{\mathbf{C}_1})|_{V_{\mathbf{C}_2}}\colon V_{\mathbf{C}_1}\to V_{\mathbf{C}_2}\] of \(\rho^\Ca_{A_2}(f)\circ\psi_{\mathbf{C}_1}\) to \(V_{\mathbf{C}_2}\) exists in \(\Ca\).

We use this data to factor \(\rho^\Ca_h\circ F\). Define a functor \(V\colon(\structc^\rho)^\op\to\Ca\) by \(V(\mathbf{C})\coloneqq V_{\mathbf{C}}\) for each \(\mathbf{C}\in\struct^\rho\) and \(V(f)\coloneqq g_f\) for each morphism \(f\colon\mathbf{C}_2\to\mathbf{C}_1\) in \(\structc^\rho\). Define natural transformations \(\theta\colon U\to V\) and \(\psi\colon V\to\rho^\Ca_{A_2}\) whose components at \(\mathbf{C}\) are \(\theta_{\mathbf{C}}\) and \(\psi_{\mathbf{C}}\), respectively.

We show that \(V\colon(\structc^\rho)^\op\to\Ca\) is a functor. Given \(\mathbf{C}\in\struct^\rho\) we have that \(V(\mathbf{C})=V_{\mathbf{C}}\in\ob(\Ca)\) by definition of \(V\) so \(V\) takes objects to objects. Given a morphism \(f\colon\mathbf{C}_2\to\mathbf{C}_1\) in \(\structc^\rho\) we have that \(V(f)=g_f\) is a morphism in \(\Ca\) by definition so \(V\) takes morphisms to morphisms. Given an object \(\mathbf{C}\in\struct^\rho_C\) and its identity morphism \(\id_{\mathbf{C}}\colon\mathbf{C}\to\mathbf{C}\) we have that \(\rho^\Ca\) is a presignature so by \autoref{prop:extractor_is_functor} we have that \(\rho^\Ca_{A_2}\) is a functor and hence \(\rho^\Ca_{A_2}(\id_{\mathbf{C}})=\id_{\rho^\Ca_{A_2}(\mathbf{C})}\). It follows that \[V(\id_{\mathbf{C}})=g_{\id_{\mathbf{C}}}=(\rho^\Ca_{A_2}(\id_{\mathbf{C}})\circ\psi_{\mathbf{C}})|_{V_{\mathbf{C}}}=(\id_{\rho^\Ca_{A_2}(\mathbf{C})}\circ\psi_{\mathbf{C}})|_{V_{\mathbf{C}}}=\psi_{\mathbf{C}}|_{V_{\mathbf{C}}}=\id_{V(\mathbf{C})}\] so \(V\) takes identities to identities. Given morphisms \(f_2\colon\mathbf{C}_3\to\mathbf{C}_2\) and \(f_1\colon\mathbf{C}_2\to\mathbf{C}_1\) in \(\structc^\rho\) we have that \(\im(\rho^\Ca_{A_2}(f_1)\circ\psi_{\mathbf{C}_1})\le\im(\psi_{\mathbf{C}_2})\) and hence
	\begin{align*}
		V(f_1\circ f_2) &= g_{f_1\circ f_2} \\
		&= (\rho^\Ca_{A_2}(f_1\circ f_2)\circ\psi_{\mathbf{C}_1})|_{V_{\mathbf{C}_3}} \\
		&= (\rho^\Ca_{A_2}(f_2)\circ\rho^\Ca_{A_2}(f_1)\circ\psi_{\mathbf{C}_1})|_{V_{\mathbf{C}_3}} \\
		&= (\rho^\Ca_{A_2}(f_2)\circ\psi_{\mathbf{C}_2})|_{V_{\mathbf{C}_3}}\circ(\rho^\Ca_{A_2}(f_1)\circ\psi_{\mathbf{C}_1})|_{V_{\mathbf{C}_2}} \\
		&= V(f_2)\circ V(f_1).
	\end{align*}
Thus, \(V\) respects composition and is a functor from \((\structc^\rho)^\op\) to \(\Ca\).

It is evident that \(\theta\) and \(\psi\) are natural transformations.

Since each of the components of \(\psi\) are injective we have that \(\psi\) is monic. By definition we have that \(\rho^\Ca_h\circ F=\psi\circ\theta\). It follows that \(\im(\rho^\Ca\circ F)\) exists in \(\funcat((\structc^\rho)^\op,\Ca)\) and contains \(\psi\).
\end{proof}

We can use the exponential Yoneda embedding to obtain a functor from \(\structc^\rho\) to the category of \(\Ca\)-structures \(\structc^{\rho^\Ca}\).

\begin{defn}[Cartesian Yoneda functor]
Given an exponential category \(\Ca\) and a signature \(\rho\colon\Ia\to\funcat(\Ca,\Da)\) the \emph{Cartesian Yoneda functor} \[\cyo\colon\structc^\rho\to\structc^{\rho^\Ca}\] is defined as follows. Given \(\mathbf{A}\in\struct^\rho_A\) we define \(\cyo(\mathbf{A})\) to be the subobject of \(\rho^\Ca_A\) containing the monomorphism \(F_{\mathbf{A}}\colon\yo^\Ca_{\mathbf{A}}\hookrightarrow\rho^\Ca_A\) given by \((F_{\mathbf{A}})_{\mathbf{C}}\colon\yo^\Ca_{\mathbf{A}}(\mathbf{C})\to\rho^\Ca_A(\mathbf{C})\) where \((F_{\mathbf{A}})_{\mathbf{C}}(f)\coloneqq f\). Given \(h\colon\mathbf{A}_1\to\mathbf{A}_2\) we define \(\cyo(h)\coloneqq h\).
\end{defn}

Intuitively, the Cartesian Yoneda functor is the natural inclusion of \(\hom(\mathbf{C},\mathbf{A})\) into \(A^C\).

We will need the following lemma about images in general categories which gives a sufficient condition for the image of structure under a morphism to be contained in another structure.

\begin{lem}
\label{lem:subobject_containment}
Let \(\Ca\) be a category with \(X,Y,Z\in\ob(\Ca)\) and morphisms \(\alpha\colon X\to Z\), \(\beta\colon X\to Y\), and \(\gamma\colon Y\hookrightarrow Z\) such that \(\alpha=\gamma\circ\beta\). Let \(U\in\ob(\Ca)\) with \(\theta_X\colon X\to U\) and \(\theta_Z\colon U\hookrightarrow Z\) a factorization of \(\alpha\) witnessing that \(\theta_Z\) is the image of \(\alpha\). We have that \(\theta_Z\le\gamma\).
\end{lem}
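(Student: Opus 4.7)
The plan is to apply the universal property of the image triple directly. The hypotheses already exhibit a competing factorization of \(\alpha\) through a monomorphism into \(Z\), so the only real work is to unwind the definitions of image candidate, image triple, and the subobject ordering on \(Z\).

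First I would observe that \((Y,\beta,\gamma)\) is an image candidate for \(\alpha\): by hypothesis \(\alpha=\gamma\circ\beta\) is a factorization, and \(\gamma\) is monic (as indicated by the hook arrow). So all of the conditions in the definition of image candidate are satisfied.

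Next, since \((U,\theta_X,\theta_Z)\) is an image triple for \(\alpha\), the universal property supplies a (unique) morphism \(s\colon U\to Y\) such that \(\gamma\circ s=\theta_Z\). By definition of the subobject ordering this equality is precisely the statement that \(\theta_Z\le\gamma\) as subobjects of \(Z\), which is the desired conclusion.

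There is no genuine obstacle here; the only conceptual point to pin down is that the universal property of the image is asymmetric in exactly the right direction — it says the image is the \emph{smallest} monic through which \(\alpha\) factors — so each competing monic factorization like \((Y,\beta,\gamma)\) dominates \(\theta_Z\) in the subobject lattice, giving the containment claimed.
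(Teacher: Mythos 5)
Your argument is correct and is essentially the paper's own proof: both recognize \((Y,\beta,\gamma)\) as an image candidate for \(\alpha\) and invoke the universal property of the image triple to produce \(s\colon U\to Y\) with \(\gamma\circ s=\theta_Z\), which is the definition of \(\theta_Z\le\gamma\). No differences worth noting.
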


\begin{proof}
Since \(\beta\colon X\to Y\) and \(\gamma\colon Y\hookrightarrow Z\) form a factorization of \(\alpha\) we have by definition of the image \(\theta_Z\) that there exists a morphism \(s\colon U\to Y\) such that \(\theta_Z=\gamma\circ s\). Thus, \(\theta_Z\le\gamma\).
\end{proof}

\begin{prop}
We have that \(\cyo\colon\structc^\rho\to\structc^{\rho^\Ca}\) is a functor.
\end{prop}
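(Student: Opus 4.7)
The plan is to verify the three conditions required for $\cyo$ to be a functor: that it sends each object $\mathbf{A} \in \struct^\rho$ to a well-defined object of $\structc^{\rho^\Ca}$, that it sends each morphism $h\colon\mathbf{A}_1\to\mathbf{A}_2$ to a morphism of structures, and that it preserves identities and composition. Since $\cyo$ is defined to be the identity on underlying morphisms in $\Ca$, the preservation of identities and composition will be immediate from the corresponding properties in $\Ca$, so the real work is in the first two conditions.

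For the object part, I would verify that $F_{\mathbf{A}}\colon\yo^\Ca_{\mathbf{A}}\hookrightarrow\rho^\Ca_A$ is a well-defined monomorphism in $\funcat((\structc^\rho)^\op,\Ca)$. At each $\mathbf{C}\in\struct^\rho_C$, the component $(F_{\mathbf{A}})_{\mathbf{C}}$ is the inclusion of $\hom(\mathbf{C},\mathbf{A})$ into $A^C=\rho^\Ca_A(\mathbf{C})$; naturality with respect to a morphism $f\colon\mathbf{C}_2\to\mathbf{C}_1$ amounts to checking that both $\rho^\Ca_A(f)\circ(F_{\mathbf{A}})_{\mathbf{C}_1}$ and $(F_{\mathbf{A}})_{\mathbf{C}_2}\circ\yo^\Ca_{\mathbf{A}}(f)$ send a morphism $g\in\hom(\mathbf{C}_1,\mathbf{A})$ to $g\circ f$. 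Pointwise injectivity gives pointwise monicity, which is monicity in the functor category.

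For the morphism part, I need to show that if $h\colon\mathbf{A}_1\to\mathbf{A}_2$ is a morphism in $\structc^\rho$ then $\im(\rho^\Ca_h\circ F_{\mathbf{A}_1})\le F_{\mathbf{A}_2}$ as subobjects of $\rho^\Ca_{A_2}$. The key observation is that postcomposition with $h$ gives a natural transformation $\yo^\Ca_h\colon\yo^\Ca_{\mathbf{A}_1}\to\yo^\Ca_{\mathbf{A}_2}$, because for each $g\in\hom(\mathbf{C},\mathbf{A}_1)$ the composite $h\circ g$ is again a morphism of $\rho$-structures and therefore lands in $\hom(\mathbf{C},\mathbf{A}_2)$. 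Tracing components, one sees that $\rho^\Ca_h\circ F_{\mathbf{A}_1}=F_{\mathbf{A}_2}\circ\yo^\Ca_h$, exhibiting a factorization of $\rho^\Ca_h\circ F_{\mathbf{A}_1}$ through the monomorphism $F_{\mathbf{A}_2}$. Applying \autoref{lem:subobject_containment} (with $F_{\mathbf{A}_2}$ in the role of $\gamma$, $\yo^\Ca_h$ in the role of $\beta$, and the image triple of $\rho^\Ca_h\circ F_{\mathbf{A}_1}$ guaranteed by the fact that $\rho^\Ca$ is a signature) yields the desired containment.

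The main obstacle I anticipate is keeping the bookkeeping straight, since the ambient functor category is $\funcat((\structc^\rho)^\op,\Ca)$ and every component has to be tracked through the definitions of $\yo^\Ca$ and $\rho^\Ca$. Once the identification $\rho^\Ca_h\circ F_{\mathbf{A}_1}=F_{\mathbf{A}_2}\circ\yo^\Ca_h$ is written down, the proof is essentially a single invocation of \autoref{lem:subobject_containment}, after which functoriality on identities and composites is trivial: $\cyo(\id_{\mathbf{A}})=\id_A=\id_{\cyo(\mathbf{A})}$ in $\structc^{\rho^\Ca}$ and $\cyo(h_2\circ h_1)=h_2\circ h_1=\cyo(h_2)\circ\cyo(h_1)$ by construction.
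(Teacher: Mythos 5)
Your proposal is correct and follows essentially the same route as the paper's proof: establish that \(F_{\mathbf{A}}\) is monic via componentwise injectivity, verify the factorization \(\rho^\Ca_h\circ F_{\mathbf{A}_1}=F_{\mathbf{A}_2}\circ\yo^\Ca_h\), and invoke \autoref{lem:subobject_containment}, with identities and composition handled trivially since \(\cyo\) acts as the identity on underlying morphisms. The explicit naturality check for \(F_{\mathbf{A}}\) is a small addition the paper leaves implicit, but the argument is otherwise the same.
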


\begin{proof}
We show that \(\cyo\) takes objects to objects. Given \(\mathbf{A}\in\struct^\rho_A\) we must show that \(F_{\mathbf{A}}\colon\yo^\Ca_{\mathbf{A}}\to\rho^\Ca_A\) is a monomorphism. Let \(U\colon(\structc^\rho)^\op\to\Ca\) be a functor and let \(H_1,H_2\colon U\to\yo^\Ca_{\mathbf{A}}\) be natural transformations. We show that \(F_{\mathbf{A}}\circ H_1=F_{\mathbf{A}}\circ H_2\) implies that \(H_1=H_2\).

Fix an object \(\mathbf{C}\in\struct^\rho_C\). We have that \((\yo^\Ca_{\mathbf{A}})({\mathbf{C}})=\hom(\mathbf{C},\mathbf{A})\) and \((\rho^\Ca_A)({\mathbf{C}})=A^C\). By definition we find that \((F_{\mathbf{A}})_{\mathbf{C}}\colon\hom(\mathbf{C},\mathbf{A})\to A^C\) is the map taking a morphism \(h\colon\mathbf{C}\to\mathbf{A}\) to its underlying function \(h\colon C\to A\). We have that \(U(\mathbf{C})\) is a set and \((H_1)_{\mathbf{C}},(H_2)_{\mathbf{C}}\colon U(\mathbf{C})\to\hom(\mathbf{C},\mathbf{A})\) are functions. Since \((F_{\mathbf{A}})_{\mathbf{C}}\) is injective we have that \((H_1)_{\mathbf{C}}=(H_2)_{\mathbf{C}}\). As the components of \(H_1\) and \(H_2\) are the same we have that \(H_1=H_2\). Thus, \(F_{\mathbf{A}}\) is a monomorphism and \(\cyo\) does take objects of \(\structc^\rho\) to objects of \(\structc^{\rho^\Ca}\).

We show that \(\cyo\) takes morphisms to morphisms. Given \(\mathbf{A}_1\in\struct^\rho_{A_1}\), \(\mathbf{A}_2\in\struct^\rho_{A_2}\), and a morphism \(h\colon\mathbf{A}_1\to\mathbf{A}_2\) we must show that \[\cyo(h)(\cyo(\mathbf{A}_1))\le\cyo(\mathbf{A}_2)\] as subobjects of \(\rho^\Ca_{A_2}\).

Choose representative monomorphisms \(F_{\mathbf{A}_1}\) and \(F_{\mathbf{A}_2}\) of \(\cyo(\mathbf{A}_1)\) and \(\cyo(\mathbf{A}_2)\), respectively. By \autoref{lem:subobject_containment} it suffices to show that there exists a natural transformation \(\eta\colon\yo^\Ca_{\mathbf{A}_1}\to\yo^\Ca_{\mathbf{A}_2}\) such that \(\rho^\Ca_h\circ F_{\mathbf{A}_1}=F_{\mathbf{A}_2}\circ\eta\). We claim that we can take \(\eta=\yo^\Ca_h\). Unraveling definitions we find that for each \(\mathbf{C}\in\struct^\rho\) and each \(f\in\yo^\Ca_{\mathbf{A}_1}(\mathbf{C})\) we have \[(\rho^\Ca_h\circ F_{\mathbf{A}_1})_{\mathbf{C}}(f)=h\circ f=(F_{\mathbf{A}_2}\circ\yo^\Ca_h)_{\mathbf{C}}(f),\] as claimed.

We show that \(\cyo\) takes identities to identities. Given \(\mathbf{A}\in\struct^\rho_A\) and \(\id_{\mathbf{A}}\colon\mathbf{A}\to\mathbf{A}\) we have that \(\cyo(\id_{\mathbf{A}})=\id_A\), which is the identity morphism of \(\cyo(\mathbf{A})\) in \(\structc^{\rho^\Ca}\).

We show that \(\cyo\) respects the composition of morphisms. Since \(\cyo\) maps a morphism \(h\colon\mathbf{A}_1\to\mathbf{A}_2\) to its underlying function \(h\colon A_1\to A_2\) and in both \(\structc^\rho\) and \(\structc^{\rho^\Ca}\) composition of morphisms is given by composition of the underlying functions we have that \(\cyo\) respects composition.
\end{proof}

The Cartesian Yoneda functor is an embedding of categories.

\begin{prop}
\label{prop:cartesian_yo_full_faith}
The functor \(\cyo\colon\structc^\rho\to\structc^{\rho^\Ca}\) is full and faithful.
\end{prop}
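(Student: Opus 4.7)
The plan is to treat faithfulness and fullness separately, with the latter being essentially a Yoneda-style evaluation-at-the-identity argument.

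For faithfulness, the definition says that $\cyo$ sends a morphism $h\colon\mathbf{A}_1\to\mathbf{A}_2$ to the underlying $\Ca$-morphism $h\colon A_1\to A_2$ viewed as a morphism between the $\rho^\Ca$-structures $\cyo(\mathbf{A}_1)$ and $\cyo(\mathbf{A}_2)$. Since two morphisms $h_1,h_2\colon\mathbf{A}_1\to\mathbf{A}_2$ are equal in $\structc^\rho$ exactly when they agree as underlying $\Ca$-morphisms, faithfulness is immediate.

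For fullness, suppose $g\colon\cyo(\mathbf{A}_1)\to\cyo(\mathbf{A}_2)$ is a morphism in $\structc^{\rho^\Ca}$, which by definition is a morphism $g\colon A_1\to A_2$ in $\Ca$ such that $g(\cyo(\mathbf{A}_1))\le\cyo(\mathbf{A}_2)$ as subobjects of $\rho^\Ca_{A_2}$. I would first unpack this inequality componentwise: using the surjective-inclusion image triple supplied by \autoref{lem:exp_cat_has_factorization} in $\funcat((\structc^\rho)^\op,\Ca)$, the containment says that for every $\mathbf{C}\in\struct^\rho$, the image of
\[
(\rho^\Ca_g\circ F_{\mathbf{A}_1})_{\mathbf{C}}\colon\hom(\mathbf{C},\mathbf{A}_1)\to A_2^C,\qquad f\mapsto g\circ f,
\]
is contained in the image of $(F_{\mathbf{A}_2})_{\mathbf{C}}\colon\hom(\mathbf{C},\mathbf{A}_2)\hookrightarrow A_2^C$. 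In other words, for every $\mathbf{C}\in\struct^\rho$ and every morphism $f\colon\mathbf{C}\to\mathbf{A}_1$ in $\structc^\rho$, the composite $g\circ f\colon C\to A_2$ is (the underlying function of) a morphism $\mathbf{C}\to\mathbf{A}_2$ in $\structc^\rho$.

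Now I would specialize to $\mathbf{C}\coloneqq\mathbf{A}_1$ and $f\coloneqq\id_{\mathbf{A}_1}$, which is legal because $\mathbf{A}_1\in\ob(\structc^\rho)$ and $\id_{\mathbf{A}_1}\in\hom(\mathbf{A}_1,\mathbf{A}_1)$. This yields that $g=g\circ\id_{A_1}$ is the underlying function of a morphism $\mathbf{A}_1\to\mathbf{A}_2$ in $\structc^\rho$; call this morphism $h$. Since $\cyo$ acts as the identity on underlying $\Ca$-morphisms, $\cyo(h)=g$, proving fullness. The only potential obstacle is correctly unraveling the subobject containment in the functor category $\funcat((\structc^\rho)^\op,\Ca)$ into the componentwise statement above, but this is handled by the pointwise construction of images in such functor categories already used in the proof that $\rho^\Ca$ is a signature; once that is in hand, the proof is a one-line application of the Yoneda trick.
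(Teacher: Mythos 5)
Your proposal is correct and follows essentially the same route as the paper's proof: faithfulness is immediate from \(\cyo(h)\coloneqq h\), and fullness is obtained by unwinding the subobject containment \(\rho^\Ca_h(\cyo(\mathbf{A}_1))\le\cyo(\mathbf{A}_2)\) (the paper does this via a natural transformation \(\eta\) with \(\rho^\Ca_h\circ F_{\mathbf{A}_1}=F_{\mathbf{A}_2}\circ\eta\), you do it componentwise via the pointwise image construction) and then evaluating at \(\id_{\mathbf{A}_1}\in\hom(\mathbf{A}_1,\mathbf{A}_1)\). The two arguments are the same Yoneda-style evaluation-at-the-identity trick.
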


\begin{proof}
We show that \(\cyo\) is full. Suppose that \(h\in\hom(\cyo(\mathbf{A}_1),\cyo(\mathbf{A}_2))\). By definition we have that \(\rho^\Ca(h)(\cyo(\mathbf{A}_1))\le\cyo(\mathbf{A}_2)\) so there exists some \(\eta\colon\yo^\Ca_{\mathbf{A}_1}\to\yo^\Ca_{\mathbf{A}_2}\) such that \(\rho^\Ca_h\circ F_{\mathbf{A}_1}=F_{\mathbf{A}_2}\circ\eta\) for representative monomorphisms \(F_{\mathbf{A}_1}\) and \(F_{\mathbf{A}_2}\) of \(\mathbf{A}_1\) and \(\mathbf{A}_2\), respectively. Since \(\id_{A_1}\in\hom(\mathbf{A}_1,\mathbf{A}_1)\) this implies that \[h = h\circ\id_{A_1} = (\rho^\Ca_h\circ F_{\mathbf{A}_1})_{\mathbf{A}_1}(\id_{A_1})	= (F_{\mathbf{A}_2}\circ\eta)_{\mathbf{A}_1}(\id_{A_1})\]
from which it follows that \(h\) is in the image of \[(F_{\mathbf{A}_2})_{\mathbf{A}_1}\colon\hom(\mathbf{A}_1,\mathbf{A}_2)\to A_2^{A_1}.\] Thus, \(h\in\hom(\mathbf{A}_1,\mathbf{A}_2)\).

Note that since \(\cyo(h)\coloneqq h\) we have that \(\cyo\) is faithful.
\end{proof}

We are thus justified in referring to \(\cyo\) as the \emph{Cartesian Yoneda embedding}. As a special case, we have proven our remark from the beginning of this section. Given a signature \(\rho\colon\Ia\to\funcat(\setcat,\Da)\) we have that \(\cyo\) is an embedding of \(\structc^\rho\) into \(\structc^{\rho^{\setcat}}\). Given objects \(\mathbf{A}\in\struct^\rho_A\) and \(\mathbf{C}\in\struct^\rho_A\) the relation of \(\cyo(\mathbf{A})\) at \(\mathbf{C}\) is a subset of \(A^C\), or a \(C\)-ary relation on \(A\) in the classical sense.

\printbibliography

\end{document}